 \newtheorem{thm}{Theorem}[section]
 \newtheorem{lem}[thm]{Lemma}
 \theoremstyle{definition}
 \numberwithin{equation}{section}
\newtheorem{theorem}{Theorem}[section]
\newtheorem{lemma}[theorem]{Lemma}
\theoremstyle{definition}
\theoremstyle{remark}
\newtheorem{remark}[theorem]{Remark}
\begin{document}

\title{Global Well-posedness for 2D Nonlinear Wave Equations without Compact Support}

\author{Yuan Cai\footnote{School of Mathematical Sciences, Fudan University, Shanghai 200433, P. R. China. Email: ycai14@fudan.edu.cn}
\and
Zhen Lei\footnote{School of Mathematical Sciences, Fudan University; Shanghai Center for Mathematical Sciences, Shanghai 200433, P. R. China. Email: zlei@fudan.edu.cn}
\and
Nader Masmoudi
\footnote{Courant Institute of Mathematical Sciences, New York University, NY 10012, USA.
Email: masmoudi@cims.nyu.edu}
}
\date{}
\maketitle

\begin{abstract}
In the significant work of \cite{Alinhac01a}, Alinhac proved the global existence of small solutions for 2D quasilinear wave equations under the null conditions.
The proof heavily relies on the fact that the initial data have compact support \cite{LSZ13}. Whether this constraint can be removed or not is still unclear. In this paper, for fully nonlinear wave equations under the null conditions, we prove the global well-posedness for small initial data without compact support. Moreover, we apply our result to a class of quasilinear wave equations.
\end{abstract}

%
%
\maketitle





\section{Introduction}

Global well-posedness for nonlinear wave equations is a well-oiled mathematical topic.
Many mathematicians including S. Alinhac, D. Christodoulou, L. H\"{o}rmander, F. John, S. Klainerman, etc.  have made tremendous contributions to this subject.
The first nontrivial long-time existence result was established by John and Klainerman in \cite{JohnKlainerman84} where they showed the almost global existence for 3D quasilinear scalar wave equations.
The landmark work of global existence for 3D quasilinear wave equations was obtained firstly by Klainerman \cite{Klainerman86} and by Christodoulou \cite{Christodoulou86} independently under null conditions.
The corresponding problem in 2D is more delicate since it only has critical decay even under the null condition.
In 2001, Alinhac \cite{Alinhac01a} introduced the so-called ghost weight and proved the global well-posedness.
However, the argument in \cite{Alinhac01a} heavily relies on the compact support of the initial data, since the use of a certain kind of Hardy-type inequality depending on the compact support is crucial \cite{LSZ13}. For the case where the initial data is not compactly supported, it's still unclear whether the existence of solution is global or not.

In this paper, we consider the following 2D fully nonlinear wave equations:
\begin{equation}\label{wave-equation}
\begin{cases}
\Box u=N_{\alpha\beta\mu\nu} \partial_\alpha \partial_\beta u \partial_{\mu} \partial_{\nu}u, \\
u(0,\cdot)=\varphi, \partial_t u(0,\cdot)=\psi,
\end{cases}
\end{equation}
where $\partial:=(\partial_t, \partial_1, \partial_2)$, $\Box$ is d' Alembertian operator, $\varphi, \psi\in H^k_{\Lambda}$ (The space $H^k_{\Lambda}$ will be explained in Section 2), $u=u(t,x_{1},x_{2})$ is the unknown variable.
Here and throughout this paper, Einstein's summation convention is used, which means that repeated indices are always summed over their ranges.
Our first goal is to prove the global well-posedness for \eqref{wave-equation} without compact support.

We denote the nonlinearities as
\begin{equation} \label{nonlinearities}
N(u,v)=N_{\alpha\beta\mu\nu} \partial_\alpha \partial_\beta u \partial_{\mu} \partial_{\nu}u.
\end{equation}
It's clear that, by a simple symmetrization procedure, $N_{\alpha\beta\mu\nu}$ can be assumed to be satisfying
the symmetry
\begin{equation} \label{symmetry-condition}
N_{\alpha\beta\mu\nu}=N_{\beta\alpha\mu\nu}=N_{\alpha\beta\nu\mu}.
\end{equation}
For global existence, we impose the standard null condition for $N(u,v)$:
\begin{equation}\label{null-condition}
N_{\alpha\beta\mu\nu}X_\alpha X_\beta X_\mu X_\nu=0
\end{equation}
for all $X\in \Sigma$, where $\Sigma$ is the light cone
$$
\Sigma=\{X\in \mathbb{R}^3:X_0^2=X_1^2 + X_2^2\}.
$$

The main result of this paper can be described as follows:

\begin{thm}\label{thm}
Let $M> 0$, $0<\gamma< \frac18$ be two given constants and $(\varphi,\psi)\in H^k_\Lambda$, with $k\geq 8$.
Suppose that the nonlinearities satisfy the symmetry \eqref{symmetry-condition}, null condition \eqref{null-condition},
and
\begin{equation}\label{thm-InitialDataCondition}
\|(\varphi,\psi)\|_{ H^k_\Lambda}<M,\quad \|(\varphi,\psi)\|_{ H^{k-1}_\Lambda}<\epsilon.
\end{equation}
There exists
a positive constant $\epsilon_0<e^{-M}$ which depend on $M,\ k,\ \gamma$
such that, if $\epsilon \leq \epsilon_0$, the fully
nonlinear wave equation \eqref{wave-equation} with initial data
$(u(0), \partial_tu(0))=(\varphi,\psi)$ has a unique global solution
which satisfies
$E_k(t)\leq C_0 M^2 \langle t\rangle ^{\gamma}$ and $E_{k-1}(t)\leq C_0\epsilon^2e^{C_0M}$
for some $C_0>1$ uniformly for $0\le t<\infty$.
\end{thm}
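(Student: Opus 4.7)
The plan is a continuity argument built on a two-tier weighted energy estimate, combining Alinhac's ghost-weight method with a substitute for the compact-support Hardy inequality tailored to the space $H^k_\Lambda$. The high-order energy $E_k(t)$ is allowed to grow slowly, as $\langle t\rangle^\gamma$, while the low-order energy $E_{k-1}(t)$ must be kept of size $\epsilon^2 e^{C_0 M}$; the two bounds reinforce each other through Klainerman-Sobolev.

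First I would commute \eqref{wave-equation} with the full family of Klainerman vector fields $\Gamma\in\Lambda$ (translations, scaling, rotation, Lorentz boosts) and form the ghost-weight energies
\begin{equation*}
E_k(t) = \sum_{|I|\le k}\int_{\mathbb{R}^2} e^{q(r-t)}|\partial \Gamma^I u|^2\, dx,
\end{equation*}
with $q$ a bounded monotone function of $r-t$. Differentiating in time produces, in the standard way, a spacetime bound for the good derivatives tangent to the outgoing light cone. I would then exploit the null condition \eqref{null-condition} to decompose every $\partial^2 u$ appearing in $\Gamma^I N(u,u)$ so that at least one factor is a good derivative or carries the extra decay $\langle t-r\rangle/\langle t+r\rangle$. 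This reduces the energy inequality to a cubic estimate whose only problematic contribution is a residual of the type $\|\langle r-t\rangle^{-1}\Gamma^I u\|_{L^2}$, precisely the term \cite{Alinhac01a} controls by Hardy's inequality under compact support.

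The main obstacle, and the novelty, is to control this residual \emph{without} compact support. I would prove a weighted Hardy-type inequality inside $H^k_\Lambda$ exchanging the loss of compact support for a factor $\langle t\rangle^\gamma$ at the top order, and for no loss at order $k-1$, where two extra derivatives can be spent on weighted Klainerman-Sobolev and the singularity in $r-t$ is absorbed by the spatial weights built into the $H^k_\Lambda$-norm. This asymmetry is exactly what forces the two-tier bound in the theorem and what fixes the ceiling $\gamma<1/8$, since a Grönwall iteration on the commuted equation must tolerate a controlled number of copies of $\langle t\rangle^\gamma$ produced in this replacement.

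Finally, I would close a bootstrap on $[0,T^\ast)$: under $E_k\le 2C_0 M^2\langle t\rangle^\gamma$ and $E_{k-1}\le 2C_0\epsilon^2 e^{C_0 M}$, Klainerman-Sobolev applied to low-order derivatives yields the standard $\langle t\rangle^{-1/2}$ pointwise decay of $|\partial u|$ with constant controlled by $E_{k-1}^{1/2}$, which makes the right-hand side of the energy inequality integrable up to the $\langle t\rangle^\gamma$ loss and improves the constants from $2C_0$ to $C_0$, provided $\epsilon\, e^{C_0 M/2}$ is small. The stipulation $\epsilon_0<e^{-M}$ is precisely what makes this smallness quantitative. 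Local well-posedness in $H^k_\Lambda$ together with the standard continuation criterion then promote the bootstrap bound to a global solution with the stated energy estimates.
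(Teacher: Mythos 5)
Your setup of the ghost-weight energy, the two-tier bootstrap, and the Klainerman--Sobolev closure is in line with the paper, but you have misidentified the central mechanism by which the compact-support restriction is removed, and at that point the proposed argument has a real gap.

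You write that after the null decomposition the ``only problematic contribution is a residual of the type $\|\langle r-t\rangle^{-1}\Gamma^I u\|_{L^2}$,'' and you propose to control it by a yet-unproved weighted Hardy-type inequality in $H^k_\Lambda$ costing $\langle t\rangle^\gamma$. This is not how the paper proceeds, and more importantly no such inequality is produced or cited, and \cite{LSZ13} flags exactly this Hardy-type step as the reason Alinhac's argument \emph{needs} compact support. The paper circumvents the issue by a structural observation that you never invoke: the nonlinearity \eqref{nonlinearities} is \emph{fully nonlinear}, $\Box u = N_{\alpha\beta\mu\nu}\partial_\alpha\partial_\beta u\,\partial_\mu\partial_\nu u$, so after commuting with $\Gamma$ every factor carries at least two derivatives, i.e.\ there is no undifferentiated $\Gamma^I u$ needing a Hardy inequality in the first place. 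Concretely, the paper's lower-order estimate (Lemma~\ref{Lemma1} combined with Lemmas~\ref{lemK-S} and~\ref{lemK-S-deriv}) uses that the null structure lets one put good derivatives and a $1/r$ factor near the light cone, while away from the light cone $\|\partial^2\Gamma^b u\|_{L^\infty}$ already enjoys $\langle t+r\rangle^{-1/2}\langle t-r\rangle^{-3/2}$ decay because there are two derivatives on which to apply Klainerman--Sobolev; this yields the crucial $\langle t\rangle^{-3/2}$ in \eqref{LowOrderEnergyEst} directly, with no Hardy inequality. The $\langle t\rangle^\gamma$ growth of $E_k$ then arises not from a weighted Hardy lemma but from integrating $\frac{d}{dt}E_k\lesssim \langle t\rangle^{-1}E_k E_{k-1}^{1/2}$ with $E_{k-1}^{1/2}\sim\epsilon\, e^{CM}$ small, so $\gamma\sim\epsilon\, e^{CM}$. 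Likewise, in the higher-order estimate the derivative-loss issue is resolved by integration by parts using the symmetry \eqref{symmetry-condition} (leading to the terms $F_1,F_2,F_3$ of Lemma~\ref{Lemma2}) with the ghost weight absorbing the resulting $\langle t\rangle^{-1}$ loss -- again no Hardy inequality. As written, your proposal relies on an unsubstantiated lemma and, in effect, treats the problem as if it were quasilinear, which is precisely the scenario the paper is designed to avoid.
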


\begin{remark}
We emphasize that our argument can also be applied to the general fully nonlinear wave equations
with cubic and higher-order nonlinear terms whose quadratic and cubic terms both satisfy null conditions.
 For simplicity, we only focus on the quadratic case here, since the higher-order terms can be treated similarly, see \cite{Alinhac01a,Sideris00}.
\end{remark}

Our second goal is to apply Theorem \ref{thm} to show the global
well-posedness for a class of quasilinear wave equations
even when the initial data is not compactly supported.
There are two key points for this purpose. The first one is
to transform the quasi-linear wave equations
to fully nonlinear equations so that the application of Theorem \ref{thm} is possible.
In general the resulting fully nonlinear wave equations
do not satisfy the symmetry condition \eqref{symmetry-condition}, but do satisfy certain null
condition if the original quasi-linear wave equations do. To avoid the loss of derivatives, we need to symmetrize
the resulting fully nonlinear wave equations and at the mean time, keep the null structure.
The second point is to show that the null condition
\eqref{null-condition} is preserved under the symmetrization procedure. See Lemma \ref{lemmaSymmetry} for more details.

Consider the following quasilinear wave equations:
\begin{equation} \label{quasi-equation}
\begin{cases}
\Box v=A_l\partial_l(N_{\mu\delta}\partial_\mu v\partial_\delta v), \\
v(0,x)=v_0(x),\partial_t v(0,x) =v_1(x).
\end{cases}
\end{equation}
Now we are ready to state the second
main result of this paper as follows:
\begin{thm} \label{thm-quasi}
For the quasilinear wave equations \eqref{quasi-equation}, we assume the following null condition holds
\begin{equation} \label{quasi-null}
 N_{\mu\delta} X_\mu X_\delta=0
\end{equation}
for all $X\in \Sigma$.
Let  $(\varphi,\psi)$ defined by \eqref{InitialData1} or by \eqref{InitialData2}-\eqref{InitialDataCondition3} in Section 5 belong to $H^k_\Lambda$ with  $k\geq 8$ and
they satisfy the condition \eqref{thm-InitialDataCondition}.
Then the equations \eqref{quasi-equation} with initial data satisfying
\eqref{InitialData1} or \eqref{InitialData2}-\eqref{InitialDataCondition3}
have global classical solutions.
\end{thm}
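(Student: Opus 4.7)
The plan is to reduce \eqref{quasi-equation} to a fully nonlinear wave equation of the form \eqref{wave-equation}, to which Theorem \ref{thm} can then be applied. Since the right-hand side of \eqref{quasi-equation} is already in the divergence form $A_l\partial_l(\cdot)$, I would introduce an auxiliary potential $u$ by $v = A_l\partial_l u$, where the ``antiderivative along $A$'' is understood either as integration in time or in a transverse spatial direction, according to whether $A$ has or does not have a nonzero temporal component. This dichotomy is what separates the two prescriptions \eqref{InitialData1} and \eqref{InitialData2}--\eqref{InitialDataCondition3} for the Cauchy data. Substitution into \eqref{quasi-equation} gives
\begin{equation*}
A_l\partial_l\bigl(\Box u - N_{\mu\delta}A_\beta A_\nu\,\partial_\beta\partial_\mu u\,\partial_\nu\partial_\delta u\bigr) = 0,
\end{equation*}
and integrating once along $A$, with the integration constant killed by a compatibility condition imposed on $(\varphi,\psi) = (u,\partial_t u)|_{t=0}$, produces the fully nonlinear equation
\begin{equation*}
\Box u = \widetilde N_{\beta\mu\nu\delta}\,\partial_\beta\partial_\mu u\,\partial_\nu\partial_\delta u,\qquad \widetilde N_{\beta\mu\nu\delta}:= N_{\mu\delta}A_\beta A_\nu.
\end{equation*}

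Next I would verify the hypotheses of Theorem \ref{thm} for this reduced equation. The null condition for $\widetilde N$ is immediate: for every $X\in\Sigma$,
\begin{equation*}
\widetilde N_{\beta\mu\nu\delta}X_\beta X_\mu X_\nu X_\delta = (A_l X_l)^2\bigl(N_{\mu\delta}X_\mu X_\delta\bigr) = 0
\end{equation*}
by \eqref{quasi-null}. The symmetry \eqref{symmetry-condition}, however, fails for $\widetilde N$ as written, since $N_{\mu\delta}A_\beta A_\nu$ is not symmetric in $(\beta,\mu)$. I would therefore replace $\widetilde N$ by its symmetrization in the pairs $(\beta,\mu)$ and $(\nu,\delta)$, and invoke Lemma \ref{lemmaSymmetry} to conclude that the symmetrized tensor still satisfies \eqref{null-condition}. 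At that point Theorem \ref{thm} applies and delivers a global classical solution $u$ with the quantitative bounds on $E_k$ and $E_{k-1}$; setting $v := A_l\partial_l u$ then gives the desired global classical solution of \eqref{quasi-equation}.

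The main obstacle is the reduction step together with the accompanying bookkeeping on the initial data, rather than the energy estimates, which are handed to us by Theorem \ref{thm}. Antidifferentiating in the direction $A$ costs one degree of freedom on a transverse hypersurface, and one must verify carefully, case by case, that the prescriptions \eqref{InitialData1} and \eqref{InitialData2}--\eqref{InitialDataCondition3} indeed deliver $(\varphi,\psi)\in H^k_\Lambda$ with the smallness \eqref{thm-InitialDataCondition} inherited from the smallness of $(v_0,v_1)$, and that the required compatibility $\Box u = \widetilde N_{\beta\mu\nu\delta}\partial_\beta\partial_\mu u\partial_\nu\partial_\delta u$ at $t=0$ can be arranged. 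The other delicate point is Lemma \ref{lemmaSymmetry} itself: symmetrizing a product of the form $N_{\mu\delta}A_\beta A_\nu$ mixes the tensor $N$ with the vector $A$, so preservation of the light-cone condition is not a formal consequence of the original null condition \eqref{quasi-null} and must be extracted from the algebraic structure of tensors vanishing on $\Sigma$ in two space dimensions.
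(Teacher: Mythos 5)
Your reduction is exactly the paper's: introduce $u$ with $v = A_l\partial_l u$ (integrating along $A$ in time when $A_0\neq0$, in a transverse spatial direction when $A_0=0$), substitute to obtain $A_l\partial_l\bigl(\Box u - A_\lambda A_\nu N_{\mu\delta}\partial^2_{\lambda\mu}u\,\partial^2_{\nu\delta}u\bigr)=0$, kill the integration constant via the compatibility condition imposed on $(\varphi,\psi)$ at $t=0$ (which is precisely what \eqref{InitialDataCondition3} arranges in the $A_0\neq0$ case), verify that $\widetilde N_{\beta\mu\nu\delta}X_\beta X_\mu X_\nu X_\delta = (A_lX_l)^2 N_{\mu\delta}X_\mu X_\delta = 0$ on $\Sigma$, symmetrize via Lemma~\ref{lemmaSymmetry}, and apply Theorem~\ref{thm}. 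That is the proof.

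One small correction to your closing remarks: you worry that Lemma~\ref{lemmaSymmetry} is delicate because symmetrizing $N_{\mu\delta}A_\beta A_\nu$ ``mixes the tensor with the vector'' and that preservation of the null condition ``is not a formal consequence.'' It is a formal consequence. The null condition \eqref{null-condition} is the vanishing of the full contraction $N_{\alpha\beta\mu\nu}X_\alpha X_\beta X_\mu X_\nu$, and this expression is already invariant under any permutation of the indices of $N$ because each index is contracted against the same vector $X$. Hence symmetrizing $N$ in any group of indices cannot change the contraction, and the symmetrized tensor vanishes on $\Sigma$ iff the original one does --- no dimension-specific structure theory of tensors vanishing on $\Sigma$ is needed. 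Also note that the hypotheses $(\varphi,\psi)\in H^k_\Lambda$ with \eqref{thm-InitialDataCondition} are assumed in the theorem statement, not deduced from properties of $(v_0,v_1)$, so there is no case-by-case verification of that kind to carry out.
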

\begin{remark}
We mention that \eqref{quasi-null} is equivalent to
\begin{equation} \label{quasi-null2}
A_l N_{\mu\delta}X_l  X_\mu X_\delta=0.
\end{equation}
See Section 5 for more details.
\end{remark}
As an example of \eqref{quasi-equation}, we consider the following prototype quasilinear wave equation
\begin{equation} \label{quasi-equation-standard}
\begin{cases}
\Box v=\partial_t ( |\partial_t v|^2 - |\nabla v|^2 ), \\
v(0,\cdot)=v_0,  \partial_t v(0,\cdot)=v_1.
\end{cases}
\end{equation}
In this case, $\varphi$ and $\psi$ which appear in Theorem \ref{thm-quasi} are
\begin{equation}
\varphi=\chi,\quad
\psi=v_0,
\end{equation}
where $\chi$ is a function satisfying the following elliptic equation
\begin{equation}
\begin{cases}
-\Delta\chi=|v_1|^2-|\nabla v_0|^2-v_1,\\
(\chi(x),\nabla\chi(x)) \in H^k_\Lambda.
\end{cases}
\end{equation}
\begin{remark}\label{RemarkUniformBound}
Indeed, we can obtain the uniform bound of the highest-order energy for the equation \eqref{quasi-equation-standard} (See Section 6). In \cite{AlinhacBook}, Alinhac proved that for the three dimensional scalar quasilinear wave equation with
small initial data under null condition, the highest-order energy is uniformly bounded (see also Wang \cite{Wang14}), and he
also conjectured that a certain time growth of the highest-order energy is a true phenomenon except for 3D scalar wave equation. However, in \cite{LeiWang15}, Lei-Wang were able to show that the uniform boundedness of the highest-order energy is still true
for 3D incompressible elastodynamics. Here we provide another counter-example to Alinhac's conjecture. We emphasize that it is still unclear whether Alinhac's conjecture is true or not for general 2D quasilinear wave equations, 3D non-relativistic wave equations and compressible elastodynamics.
\end{remark}

This paper is mainly inspired by the recent work of global existence result for 2D incompressible elastodynamics under a kind of \textit{strong null condition} in \cite{Lei14}.
In the case of general 2D quasilinear wave equations, to prove the global existence of small solutions under null conditions, Alinhac \cite{Alinhac01a} used a kind of
Hardy-type inequality to produce good unknowns $\partial(\partial_t+\partial_r)\Gamma^\alpha u$  which decay as $\langle t \rangle ^{-1}$ in $L^2$.
For that purpose,
 the initial data is required to have compact support.
Our strategy is to focus on the fully nonlinear cases in which the equations naturally have one more derivative
than the quasilinear ones. Hence we can gain an extra $\langle t\rangle^{-1}$ decay in the lower-order energy estimate.
Since we don't need to use Hardy-type inequality to create an extra derivative, the constraint of compact support can be removed.
Moreover, for a class of quasilinear wave equations \eqref{quasi-equation} with the initial data satisfying \eqref{InitialData1} or \eqref{InitialData2}, we can transform them into fully nonlinear cases, hence the compact support constraint is removed and we can still obtain the global well-posedness. In particular, we can even prove the uniform bound of the highest-order energy for the equation \eqref{quasi-equation-standard}.

From now on, we review some related results concerning nonlinear wave equations and elastodynamics.
When $n\geq4$, the global existence of small solutions to the Cauchy problem of quasilinear wave equations is easy and can be obtained by the fact of subcritical decay, for instance, see \cite{Klainerman85, KLainermanPonce83, Shatah}.
The nontrivial long-time behavior was firstly established by John and Klainerman in \cite{JohnKlainerman84} where they showed the almost global existence for 3D quasilinear wave equations. In general, this is sharp \cite{John81}. By introducing the null condition, the global existence results were obtained firstly by Klainerman \cite{Klainerman86} and by Christodoulou \cite{Christodoulou86} independently. Klainerman's proof uses the vector field theory and generalized energy method, while Chritodoulou's argument relies on conformal mapping method. For nonrelativistic systems of nonlinear wave equation where the Lorentz invariance is not available, Klainerman and Sideris introduced the weighted $L^2$ energy with the use of only invariance of translation, rotation, scaling and obtained the almost global existence in 3D \cite{KlainermanSiderisS96} (see an earlier proof by John \cite{John88} using $L^1-L^\infty$ estimate). Subsequently, Sideris adapted the weighted energy method to get the global existence for 3D elastic wave under null condition \cite{Sideris96}. Be of importance is that Sideris \cite{Sideris00} and Agemi \cite{Agemi00} got the global existence under nonresonance null condition which is physically compatible with the system. For 3D incompressible elastodynamics, the global existence was obtained by Sideris and Thomases \cite{SiderisThomas05, SiderisThomas07}. For wave systems with different speed, the global well-posedness were proved by Sideris and Tu in \cite{SiderisTu01}; see also \cite{Yokoyama} for a different method. While there is no null condition, the finite time blow-up was shown for nonlinear wave equations \cite{Alinhac00b,John81} and for compressible elastodynamics \cite{JohnBook,T98}  in three dimension.

In 2D, the blow-up results for quasilinear wave equations were firstly shown by Alinhac in \cite{Alinhac99b,Alinhac99a,Alinhac01b}. Compared with
the three dimensional case, the corresponding long time existence problem is more delicate since quadratic terms have only critical decay even under the assumption of null condition. In the semilinear case, Godin got the global existence under null condition \cite{Godin93}.
In the quasilinear case, if the nonlinearities are cubic terms and they satisfy the null condition, a series of global existence results were derived by \cite{Hoshiga95,Katayama93,Katayama95, ZhouY92} etc. When the nonlinearities contain the quadratic terms, Alinhac \cite{Alinhac01a} was the first to prove the
global existence under both null condition. In Alinhac's proof, the key point is the introduction of ghost weight. However, the method used in \cite{Alinhac01a} heavily relies on the assumption of compact support to the initial data, since a certain kind of Hardy-type inequality depending on the compact support of initial data is crucial \cite{LSZ13}. While there is no Lorentz invariance, the corresponding problem is more complicated. The first non-trivial long time behavior results was established by Lei, Sideris and Zhou \cite{LSZ13} where the authors established the almost global existence for incompressible elastodynamics in Eulerian coordinates. The global well-posedness is finally established by Lei \cite{Lei14} in which the author found a kind of inherent ``strong null condition'' in Lagrangian coordinates
(see a new proof by Wang using space time resonance method \cite{WangXueCheng2014}). We remark the results in \cite{Lei14,LSZ13, WangXueCheng2014} don't require the compact support of the initial data.

The paper is organized as follows. In Section 2, we give some notations and necessary lemmas which are important for our energy estimates. In Section 3, we focus on the null structure of the nonlinearities. Section 4 concerns the higher and lower order energy estimates for \eqref{wave-equation} which yield the first main result of the paper. Then the second main result for the global solutions of \eqref{quasi-equation} is proved in Section 5. In the last section, we show the uniform bound of the highest-order energy for the equation \eqref{quasi-equation-standard}.

\section{Preliminaries}
In the whole paper, we use the following notation conventions. Points in space-time $\mathbb{R}^+ \times \mathbb{R}^2$ are denoted by $$X=(t,x)=(t,x_1,x_2).$$
Partial derivatives are written as
$$
\partial_0=\partial_t=\frac{\partial}{\partial t},\quad \text{and} \quad
\partial_i=\frac{\partial}{\partial x_i},\quad 1\leq i\leq2.
$$
We also abbreviate space derivative and space-time derivative as
$$
\nabla=(\partial_1,\partial_2), \quad \text{and}  \quad
\partial=(\partial_t,\partial_1,\partial_2).
$$
For the convenience, we denote
$$
r=|x|, \quad \omega=\frac xr, \quad \omega^\perp= (\omega^\perp_1,\omega^\perp_2)=(-\omega_2,\omega_1),
\quad \langle a\rangle=\sqrt{1+a^2}.
$$
We often decompose space derivative into radial and angular components
\begin{equation}\label{spatial-decomposition}
\nabla=\omega \partial_r+\frac{\omega^{\perp}}{r}\partial_\theta,
\end{equation}
where $\partial_r=\omega\cdot\nabla, \partial_\theta=x^\perp \cdot \nabla$.
This fact plays an important role in our argument.

A central role in our paper is the application of the generalized vector field operators,
which was introduced by Klainerman \cite{Klainerman85}:
\begin{eqnarray*}
&&\Omega=-x_2 \partial_1+x_1\partial_2, \\
&&S=t\partial_t+r\partial_r, \\
&&L_i=t\partial_i+x_i\partial_t,\quad 1\leq i\leq2.
\end{eqnarray*}
Note that if $u(t,x)$ is a solution to \eqref{wave-equation}, then $\lambda^{-2}u(\lambda x,\lambda t)$ is also a solution with initial data $(\lambda^{-2}\varphi(\lambda x),\lambda^{-1}\psi(\lambda x) )$ for any $\lambda >0$. Hence more precisely, we define the modified scaling generator $\tilde{S}=S-2$. The seven vector fields used in this paper can be denoted by $\Gamma=(\partial,\Omega,L_1,L_2,\tilde{S})$. For $\Gamma^a u$, we mean
$\Gamma^{a_1}...\Gamma^{a_7}$, where $a$ is multi-index $a=(a_{1},\cdots, a_{7})$.
We also use the abbreviation $\Gamma^ku=\{ \Gamma^a u: |a|\leq k\}$.

Based on the above preparation, we define the generalized energy in line with the wave equations by
\begin{equation*}
E_k(u(t))=\frac 12 \sum_{|a|\leq k-1}\int_{\mathbb{R}^2}  |\partial \Gamma^a u|^2 dx.
\end{equation*}
To describe the space of the initial data, we introduce the following notation (See \cite{Sideris00}):
\begin{equation}\nonumber
\Lambda = \{\nabla, r\partial_r, \Omega\},
\end{equation}
and
\[
H^k_\Lambda =\{(f,g):\sum_{| a  |\le k-1}(\|\Lambda^a f\|_{L^2}+\|\nabla\Lambda^a f\|_{L^2}+\|\Lambda^a g\|_{L^2} )<\infty\},
\]
with the norm
\[
\|(f,g)\|_{H^k_\Lambda}=\sum_{| a |\leq k-1} (\|\nabla\Lambda^a f\|_{L^2}+\|\Lambda^a g\|_{L^2}).
\]
Space $\dot{H}_\Gamma(T)$  is defined by
\[
\dot{H}_\Gamma^k(T)=\{u:[0,T)\to \mathbb{R}|\; \partial u\in\cap_{j=0}^{k-1} C^j([0,T);H^{k-j-1}_\Lambda )\}.
\]
Solutions will constructed in the space $\dot{H}_\Gamma(T)$ with the norm
\[
\sup_{0\le t<T} E_k(t)^{1/2}.
\]
It is obviously that $\dot{H}_\Gamma^k(T) \subseteq C^2([0,T)\times\mathbb{R}^2)$ if $k\geq8$.
This confirms the fact that the solutions we constructed are classical one.

Throughout this paper, we use $A\lesssim B$ to denote $A\leq C B$ for some absolute constant $C$,
whose meaning may change from line to line.

Now we state some preliminary weighted estimates.
\begin{lem}\label{lemK-S}
There holds
\begin{equation}\label{K-S}
\langle t+r \rangle^{\frac 12}\langle t-r \rangle^{\frac 12} |u| \lesssim \sum_{|a| \leq 2}\|\Gamma^a u \|_{L^2},
\end{equation}
provided the right-hand side is finite.
\end{lem}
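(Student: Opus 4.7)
The inequality is the standard two-dimensional Klainerman--Sobolev weighted estimate. The plan is to derive it from the ordinary two-dimensional Sobolev embedding $H^2(B_1)\hookrightarrow L^\infty(B_1)$ applied to a rescaled copy of $u$ on a neighborhood of $(t,x_0)$ chosen to match the Klainerman vector fields. The weight $\langle t+r\rangle^{1/2}\langle t-r\rangle^{1/2}$ will emerge from the Jacobian of the rescaling, while the rescaled coordinate derivatives will be identified with $\Gamma$-derivatives of $u$ through the algebraic structure of the generalized vector fields.

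Concretely, I would first invoke rotational invariance (using $\Omega\in\Gamma$) to reduce to the case $x_0=(r_0,0)$ with $r_0=|x_0|$, and split into cases depending on $t+r_0$. When $t+r_0\lesssim 1$, both weights are $O(1)$ and the ordinary embedding with $\partial\subset\Gamma$ closes the estimate. In the complementary regime, I would pass to polar coordinates $(r,\theta)$ and apply Sobolev to $v(y_1,y_2)=u\bigl(t,(r_0+\mu y_1)(\cos(\nu y_2),\sin(\nu y_2))\bigr)$ on the unit square, with $\mu\sim\min(r_0,\langle t-r_0\rangle)$ and $\nu\sim 1$. The Jacobian $dx\sim r\mu\,dy\sim r_0\mu\,dy$ gives the factor $r_0\mu\sim\langle t+r_0\rangle\langle t-r_0\rangle$ needed on the left. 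To recognize the $y$-derivatives as $\Gamma$-operators, I use $\partial_\theta=\Omega$ together with the Klainerman identities $(t-r)(\partial_t-\partial_r)=S-\omega\cdot L$ and $(t+r)(\partial_t+\partial_r)=S+\omega\cdot L$, which rewrite $\mu\partial_r u$ as a bounded-coefficient combination of $Su$ and $(\omega\cdot L)u$ on the relevant rectangle (with the $(t-r)^{-1}$ denominators neutralized by $\mu\le\langle t-r_0\rangle$).

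The main obstacle is the \emph{deep interior regime} $r_0\ll t$, where the choice $\mu\sim r_0$ yields only the factor $r_0^2$ on the left rather than the desired $\langle t+r_0\rangle\langle t-r_0\rangle\sim t^2$. The remedy is to bring in the Lorentz boosts through $t\partial_i=L_i-x_i\partial_t$: on the rescaled region one has $r\le t/2$, so this identity trades a factor of $t$ for a bounded combination of an $L$-derivative and $\partial_t$, recovering the missing weight in the derivative bookkeeping. The subcase $r_0\lesssim 1$, $t\gg 1$ is handled by the same Lorentz-boost trick applied on a unit ball centered at the origin, using $\tilde S\in\Gamma$ to absorb lower-order scaling corrections. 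Once the case analysis is assembled, the estimate follows by summing the $L^2_y$ norms of $\partial^{\alpha}_y v$ for $|\alpha|\le 2$ against the $\Gamma$-derivatives on the right-hand side.
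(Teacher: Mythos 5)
Your proposal and the paper take genuinely different routes. The paper disposes of this lemma in one line: it observes that with $\widetilde S$ replaced by $S=t\partial_t+r\partial_r$ the estimate is exactly Klainerman's classical weighted Sobolev inequality from \cite{Klainerman85}, and that $\widetilde S=S-2$ changes $\|\widetilde S^a u\|_{L^2}$ only by terms already present among $\{\Gamma^b u:\,|b|\le |a|\}$. You instead reconstruct the classical inequality from scratch via rescaled Sobolev embeddings on well-chosen rectangles and balls. That is the correct skeleton of the standard proof, and the case analysis (near the light cone via a radial-by-angular rescaling with $\mu\sim\min(r_0,\langle t-r_0\rangle)$; deep interior via Lorentz boosts) is the right one. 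Two details, however, are stated more confidently than they deserve. Near the light cone you claim $\mu\partial_r u$ is a bounded-coefficient combination of $Su$ and $(\omega\cdot L)u$ ``with the $(t-r)^{-1}$ denominators neutralized by $\mu\le\langle t-r_0\rangle$''; but if $|t-r_0|\lesssim 1\lesssim\mu$ the rescaled region crosses $r=t$, where $(t-r)^{-1}$ is unbounded, so you must either shrink $\mu$ to a fixed fraction of $|t-r_0|$ when $|t-r_0|\gtrsim 1$, or (when $|t-r_0|\lesssim 1$, hence $\mu\sim 1$) simply use $\mu\partial_r u\lesssim |\partial u|$ directly rather than the $(t\pm r)^{-1}$ identities. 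In the deep interior you say $t\partial_i=L_i-x_i\partial_t$ ``trades a factor of $t$ for a bounded combination of an $L$-derivative and $\partial_t$''; this is not immediate, because on $B_{t/2}$ the term $x_i\partial_t u$ still carries a coefficient of size $\sim t$. Recovering $t|\partial u|\lesssim\sum|\Gamma u|+|u|$ there requires a genuine bootstrap: first derive $(t^2-r^2)\partial_t u=t\widetilde S u+2tu-x\cdot Lu$ (so that, on $r\le t/2$, $t|\partial_t u|\lesssim|\widetilde S u|+|u|+|Lu|$), and only then feed that bound back into $t\partial_i u=L_iu-x_i\partial_t u$; iterating to second order also produces commutator terms that must be tracked. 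Both issues are repairable and are implicit in the classical references, but as written they are gaps. Given that the paper's own proof is a one-line citation plus the $\widetilde S=S-2$ remark, your fully self-contained localization argument proves substantially more than the paper needs, at the cost of considerably more bookkeeping.
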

\begin{proof}
If we replace $\widetilde{S}$ by $S$, then \eqref{K-S} is
a classical inequality by S. Klainerman \cite{Klainerman85}.
Note that $\widetilde{S}$ and $S$ differ each other by a lower order term,
thus the lemma is an easy consequence of the classical  Klainerman-Sobolev inequality.
\end{proof}
The following lemma states the relationship between the ordinary derivatives and the vector field.
\begin{lem}\label{lemGamma}
There holds
\begin{equation}\label{Gamma1}
|(t+r)(\partial_t+\partial_r)u| \leq  |\Gamma u|,
\end{equation}
and
\begin{equation}\label{Gamma2}
|(t-r)\partial u|\lesssim |\Gamma u|.
\end{equation}
\end{lem}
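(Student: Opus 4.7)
The plan is to derive both estimates from purely algebraic identities that express $(t+r)(\partial_t+\partial_r)$ and $(t^2-r^2)\partial$ as linear combinations of the vector fields $S$, $L_1$, $L_2$, $\Omega$, and then apply the triangle inequality. No analysis is needed beyond linear algebra together with the polar decomposition $\nabla=\omega\partial_r+(\omega^\perp/r)\partial_\theta$ already recorded in \eqref{spatial-decomposition}.

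For the first inequality I would start from the definitions $S=t\partial_t+r\partial_r$ and $L_i=t\partial_i+x_i\partial_t$ and compute $\omega_iL_i=t\partial_r+r\partial_t$. Adding the two yields the identity
\[
(t+r)(\partial_t+\partial_r)=S+\omega_iL_i.
\]
Applying this to $u$ and using $|\omega_i|\le1$ with the triangle inequality gives the bound immediately. The paper uses $\tilde S=S-2$ in place of $S$, so the harmless zeroth-order correction $2u$ is implicitly absorbed on the right (and $|u|\lesssim|\Gamma u|$ is standard for the solutions of interest).

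For the second inequality I would produce analogous identities that isolate $(t-r)\partial$. Direct expansion using $x_i\partial_i=r\partial_r$ gives
\[
(t^2-r^2)\partial_t=tS-x_iL_i,
\]
so that $(t+r)\bigl|(t-r)\partial_tu\bigr|\le t|Su|+r\sum_i|L_iu|\le(t+r)|\Gamma u|$, and dividing by $t+r$ yields $|(t-r)\partial_t u|\lesssim|\Gamma u|$. For the spatial components I would first note $r^2\partial_j=x_jr\partial_r+r\omega_j^\perp\Omega$ (a direct consequence of the polar decomposition, using $r\omega_j=x_j$), which gives
\[
(t^2-r^2)\partial_j=tL_j-x_jS-r\omega_j^\perp\Omega,
\]
and the same division by $t+r$ closes the estimate.

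The only obstacle I foresee is bookkeeping. One must check that the division by $t+r$ is legitimate (at $t=r=0$ both sides vanish, so the inequality is trivial there) and then observe that each of the prefactors $t/(t+r)$, $r/(t+r)$, $x_j/(t+r)$, and $|\omega_j^\perp|$ is bounded by $1$ uniformly in $(t,x)$. The switch between $S$ and $\tilde S$ produces only a lower-order $|u|$ contribution, so the entire lemma reduces to the three identities above plus a triangle-inequality manipulation.
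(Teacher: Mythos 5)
Your proposal is correct and coincides in substance with the paper's argument: the paper simply cites Klainerman's classical identities and remarks that $\tilde S=S-2$ differs from $S$ by a zeroth-order term, and your three algebraic identities are exactly what that citation delivers. The one point both you and the paper gloss over in the same way is that replacing $S$ by $\tilde S$ introduces a $2u$ term, so the stated inequalities hold literally only with $S$; with $\tilde S$ they acquire a harmless lower-order correction, which is the same caveat the paper handles with the phrase ``Note the relation between $\tilde S$ and $S$.''
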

\begin{proof}
If we replace $\widetilde{S}$ by $S$, one can find the result in \cite{Klainerman85}. Note the relation between $\widetilde{S}$ and $S$, one immediately has the lemma.
\end{proof}

Combining Lemma 2.1 with \eqref{Gamma2}, we get
strengthened decay rate for $\|\partial^2 u\|_{L^\infty}$ away from the light cone .
\begin{lem}\label{lemK-S-deriv}
Let $u\in E_4$, then there holds
\begin{eqnarray}\nonumber
\langle t+r \rangle^{\frac 12}\langle t-r \rangle^{\frac 32} |\partial^2 u|
\lesssim \sum_{|a| \leq 3}\|\partial\Gamma^a u \|_{L^2},
\end{eqnarray}
\end{lem}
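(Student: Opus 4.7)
The plan is to bootstrap Lemma \ref{lemGamma} on top of the standard Klainerman--Sobolev inequality (Lemma \ref{lemK-S}) to squeeze out one extra power of $\langle t-r\rangle$ beyond what the latter alone provides. The key pointwise identity we want to produce is
\[
\langle t-r\rangle\,|\partial^2 u|\lesssim |\partial^2 u|+|\Gamma\partial u|,
\]
obtained by applying \eqref{Gamma2} to the function $\partial u$ in place of $u$: this gives $|(t-r)\partial^2 u|\lesssim |\Gamma\partial u|$ and combining this with the trivial bound $|\partial^2 u|\le|\partial^2 u|$ produces the desired $\langle t-r\rangle$ factor on the left (since $\langle t-r\rangle\lesssim 1+|t-r|$).

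Next, I would apply Lemma \ref{lemK-S} separately to the two functions $\partial^2 u$ and $\Gamma\partial u$ appearing on the right. For the first one this yields
\[
\langle t+r\rangle^{1/2}\langle t-r\rangle^{1/2}|\partial^2 u|\lesssim \sum_{|a|\le 2}\|\Gamma^a\partial^2 u\|_{L^2},
\]
and for the second one, treating $\Gamma\partial u$ itself as the function to which Klainerman--Sobolev is applied,
\[
\langle t+r\rangle^{1/2}\langle t-r\rangle^{1/2}|\Gamma\partial u|\lesssim \sum_{|a|\le 2}\|\Gamma^a(\Gamma\partial u)\|_{L^2}\lesssim \sum_{|b|\le 3}\|\Gamma^b\partial u\|_{L^2}.
\]
Multiplying the bootstrapped pointwise inequality above by $\langle t+r\rangle^{1/2}\langle t-r\rangle^{1/2}$ and plugging in these two Klainerman--Sobolev estimates then produces
\[
\langle t+r\rangle^{1/2}\langle t-r\rangle^{3/2}|\partial^2 u|\lesssim \sum_{|a|\le 2}\|\Gamma^a\partial^2 u\|_{L^2}+\sum_{|b|\le 3}\|\Gamma^b\partial u\|_{L^2},
\]
which is essentially the claimed bound modulo the arrangement of derivatives on the right-hand side.

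The final step is the standard commutator bookkeeping: each commutator $[\Gamma,\partial]$ is a constant-coefficient linear combination of components of $\partial$, so inductively $\Gamma^a\partial = \sum_{|b|\le|a|}c_{a,b}\,\partial\,\Gamma^b$. This lets me rewrite
\[
\|\Gamma^a\partial^2 u\|_{L^2}\lesssim\sum_{|b|\le|a|}\|\partial^2\Gamma^b u\|_{L^2}\lesssim\sum_{|c|\le|a|+1}\|\partial\,\Gamma^c u\|_{L^2},
\]
where in the last step one component of $\partial$ is absorbed into $\Gamma$ (recall $\partial\subset\Gamma$). A similar rearrangement applies to $\|\Gamma^b\partial u\|_{L^2}$. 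Since $|a|\le 2$ and $|b|\le 3$, both right-hand sides are controlled by $\sum_{|c|\le 3}\|\partial\Gamma^c u\|_{L^2}$, which completes the proof.

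The only nontrivial piece is the first step — recognizing that \eqref{Gamma2} upgrades $\langle t-r\rangle^{1/2}$ to $\langle t-r\rangle^{3/2}$ once combined with Klainerman--Sobolev; the remainder is routine commutator algebra and does not present a real obstacle.
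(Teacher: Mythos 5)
Your proof is correct and follows exactly the route the paper intends, which is stated only in one line ("Combining Lemma 2.1 with \eqref{Gamma2}, we get..."): apply \eqref{Gamma2} to $\partial u$ to gain the extra $\langle t-r\rangle$ factor, then invoke the Klainerman--Sobolev inequality of Lemma \ref{lemK-S}, and finish with standard $[\Gamma,\partial]$ commutator bookkeeping. No gaps.
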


We have the following lemma which asserts that the null structure is preserved upon the commutation between the $\Gamma$ operators and the wave equations \eqref{wave-equation}. This property would be extremely important in our proof.
\begin{lem}\label{LemmGammaEq}
Assume the nonlinearities of the wave equations \eqref{wave-equation} satisfy the null condition \eqref{null-condition}, then there holds
\begin{equation}\label{wave-Gamma}
\Box \Gamma^a u=\sum_{b+c+d = a}  N_d(\Gamma^b u,\Gamma^c u),
\end{equation}
where each term $N_d$ is of the form \eqref{nonlinearities}
 satisfying  \eqref{null-condition}, especially $N_0(u,v)=N(u,v)$.
\end{lem}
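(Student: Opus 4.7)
The plan is to prove the statement by induction on $|a|$. The base case $|a|=0$ is exactly the equation \eqref{wave-equation}, which gives $\Box u = N(u,u) = N_0(u,u)$. For the inductive step, I would fix $a$ with $|a|\geq 1$ and write $\Gamma^a = \Gamma_i \Gamma^{a'}$, where $\Gamma_i$ is one of the seven generators in $\{\partial_0,\partial_1,\partial_2,\Omega,L_1,L_2,\tilde{S}\}$. Using the standard commutation relations
\[
[\Box, \partial_\alpha] = [\Box, \Omega] = [\Box, L_j] = 0, \qquad [\Box, \tilde{S}] = 2\Box,
\]
one obtains
\[
\Box \Gamma^a u = \Gamma_i(\Box \Gamma^{a'} u) + [\Box, \Gamma_i]\, \Gamma^{a'} u.
\]
The commutator term is nonzero only for $\Gamma_i = \tilde{S}$, in which case it equals $2\,\Box \Gamma^{a'} u$; by the inductive hypothesis this is already a sum of null-form terms of the stated shape and can be absorbed into the final expression.

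It remains to handle $\Gamma_i(\Box \Gamma^{a'} u) = \Gamma_i \sum_{b+c+d=a'} N_d(\Gamma^b u, \Gamma^c u)$. Applying Leibniz together with the elementary commutator identities $[\Gamma_i, \partial_\alpha] = c_{i\alpha}^{\gamma}\,\partial_\gamma$ with constant coefficients $c_{i\alpha}^\gamma$, I would obtain
\[
\Gamma_i N_d(v, w) = N_d(\Gamma_i v, w) + N_d(v, \Gamma_i w) + \tilde{N}_d(v, w),
\]
where the coefficients of $\tilde{N}_d$ are built by contracting the matrix $c_{i\cdot}^{\,\cdot}$ into each of the four slots of the coefficient tensor of $N_d$. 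The first two terms naturally promote $b$ or $c$ to $b+e_i$ or $c+e_i$, while the last is re-indexed as $N_{d+e_i}$, so that the new triple of multi-indices still sums to $a'+e_i = a$. Thus the formal structure of the conclusion is preserved.

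The essential and only nontrivial point, which I expect to be the main obstacle, is that each $\tilde{N}_d$ still satisfies the null condition \eqref{null-condition}. The key observation is that each $\Gamma_i$ corresponds, through its commutator action on partial derivatives, to a linear transformation $M_i$ of $\mathbb{R}^{1+2}$ whose one-parameter flow preserves the null cone $\Sigma$: the rotation $\Omega$ and Lorentz boosts $L_j$ preserve the Minkowski form $Q(X) = X_0^2 - X_1^2 - X_2^2$ outright, while $\tilde{S}$ acts as the overall dilation, which rescales $Q$ and therefore maps $\Sigma$ to itself; translations act trivially on coefficients. Unraveling the definitions, the quartic symbol polynomial $\tilde{p}(X) = (\tilde{N}_d)_{\alpha\beta\mu\nu}\, X^\alpha X^\beta X^\mu X^\nu$ is, up to sign, the Lie derivative of $p(X) = (N_d)_{\alpha\beta\mu\nu}\, X^\alpha X^\beta X^\mu X^\nu$ along the flow generated by $M_i$. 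Since $p$ vanishes on $\Sigma$ by the inductive hypothesis and the flow preserves $\Sigma$, the derivative $\tilde{p}$ must vanish on $\Sigma$ as well. This is precisely the null condition for $\tilde{N}_d$, which closes the induction.
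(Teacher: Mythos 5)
Your proof is correct. The paper offers no argument for this lemma---it simply cites H\"{o}rmander's book---and the induction you carry out, using the constant commutators $[\Gamma_i,\partial_\alpha]=c_{i\alpha}^\gamma\partial_\gamma$ together with the observation that each matrix $c_i$ generates a flow (a rotation, a Lorentz boost, or a dilation) preserving the null cone $\Sigma$, so that the resulting quartic symbol $\tilde p$, being the derivative of $p$ along this flow, still vanishes on $\Sigma$, is precisely the standard argument that the citation invokes.
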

\begin{proof}
See L. H\"{o}rmander \cite{HormanderBook}.
\end{proof}

The local existence of the classical solutions to \eqref{wave-equation} and \eqref{quasi-equation} are trivial by standard method,
we omit the details here.
In order
to get the global existence result, it suffices to establish the following \textit{a priori} estimates
\begin{eqnarray}
\frac{d}{dt}E_k(t)\lesssim \langle t\rangle^{-1} E_k(t)E^{\frac 12}_{k-1}(t),
\label{HighOrderEnergyEst}\\
\frac{d}{dt}E_{k-1}(t)\lesssim \langle t\rangle^{-\frac 32} E_k^{\frac 12}(t)E_{k-1}(t)
\label{LowOrderEnergyEst}.
\end{eqnarray}
Once the above estimates are obtained, the main results hold by standard continuity method. For the details, one can consult \cite{Lei14}.

\section{Estimate for the Nonlinearities}
In this section, we are going to study the good properties of the nonlinearities due to the null condition.
Both the higher-order energy estimate and the lower-order energy estimate will benefit from those properties.
We will state them in two different lemmas.

The use of \eqref{null-condition} for the lower-order energy estimate is captured in the following lemma. It says that we are able to gain $\langle t\rangle^{-1}$ decay near the light cone.
\begin{lem} \label{Lemma1}
Let $1\lesssim r$. Suppose that the nonlinearities \eqref{nonlinearities} satisfy \eqref{null-condition}.
Then there holds
\begin{equation}\label{nonlinearity-estimate}
|N(u,v)|
\lesssim
\frac{1}{r} \big(|\partial \Gamma u|+|\partial u|\big)   ( |\partial \Gamma v|+|\partial v| ).
\end{equation}
\end{lem}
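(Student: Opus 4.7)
The plan is to exploit the standard null decomposition of derivatives near the light cone. I would introduce the null frame $\underline{T}=\partial_t-\partial_r$ (the ``bad'' incoming direction), $T=\partial_t+\partial_r$ (the ``good'' outgoing direction), together with the angular derivative $r^{-1}\partial_\theta$. Set $\hat{X}=(1,-\omega_1,-\omega_2)$, which lies in the light cone $\Sigma$. A direct computation, using the spatial decomposition $\nabla=\omega\partial_r+r^{-1}\omega^\perp\partial_\theta$ from \eqref{spatial-decomposition}, yields the pointwise identity
\[
\partial_\alpha \;=\; \tfrac{1}{2}\hat{X}_\alpha\,\underline{T}+G_\alpha,
\]
where $G_\alpha$ is a linear combination of $T$ and $r^{-1}\partial_\theta$ with $\omega$-dependent coefficients of size $O(1)$.

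Next, I would substitute this decomposition into each of the four derivatives appearing in $N(u,v)=N_{\alpha\beta\mu\nu}\partial_\alpha\partial_\beta u\,\partial_\mu\partial_\nu v$ and expand. The key observation is that the unique ``all-bad'' contribution
\[
\tfrac{1}{16}\,N_{\alpha\beta\mu\nu}\hat{X}_\alpha\hat{X}_\beta\hat{X}_\mu\hat{X}_\nu\,(\underline{T}^{\,2}u)(\underline{T}^{\,2}v)
\]
vanishes identically by the null condition \eqref{null-condition}, since $\hat{X}\in\Sigma$. Every other term in the expansion contains at least one factor of a ``good'' operator ($T$ or $r^{-1}\partial_\theta$) acting on $u$ or $v$, or else a commutator contribution in which a derivative has fallen on the $\omega$-dependent coefficient $\hat{X}_\gamma$, producing an explicit $r^{-1}$ factor (since $|\partial\omega|\lesssim r^{-1}$).

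To convert each of these remaining terms into the claimed bound, I would use Lemma \ref{lemGamma}: in the regime $1\lesssim r$, one has $\langle t+r\rangle\gtrsim r$, so
\[
|Tw|\;=\;|(\partial_t+\partial_r)w|\;\lesssim\;\tfrac{1}{r}|\Gamma w|,\qquad
\left|\tfrac{\partial_\theta w}{r}\right|\;\lesssim\;\tfrac{|\Omega w|}{r}\;\lesssim\;\tfrac{1}{r}|\Gamma w|.
\]
Applied with $w=\partial u$ (respectively $w=\partial v$), and combined with the harmless commutator terms of the form $r^{-1}|\partial u|$, this produces the required majorant $\frac{1}{r}(|\partial\Gamma u|+|\partial u|)(|\partial\Gamma v|+|\partial v|)$.

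The main technical nuisance, rather than a true obstacle, is the bookkeeping of commutators: whenever $\partial_\alpha$ is moved past the coefficient $\hat{X}_\beta$, or $T$ is moved past $\partial_\gamma$, one generates additional terms, but each such term carries either an explicit $r^{-1}$ from differentiating $\omega$ or an extra good-derivative factor, so all of them are subsumed in the two absorbing factors $|\partial u|/r$ and $|\partial v|/r$. I would organize the proof by listing the finitely many term types arising in the expansion and verifying each one satisfies the desired estimate, which keeps the argument systematic rather than combinatorially heavy.
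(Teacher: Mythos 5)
Your proposal is correct and follows essentially the same route as the paper: the decomposition $\partial_\alpha=\tfrac12\hat X_\alpha\underline T+G_\alpha$ is precisely the paper's $\partial=Y^-D^-+R$ with $\hat X=Y^-$, $\underline T=2D^-$, $G=R$, the null condition kills the identical all-bad quartic term, and the remaining pieces are absorbed via the same vector-field bounds on $T$, $r^{-1}\partial_\theta$, and the $O(r^{-1})$ commutators from differentiating $\omega$. The paper just packages these facts as the preliminary pointwise estimates \eqref{non-estimate-1}--\eqref{non-estimate-2} before expanding.
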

\begin{proof}
First, we introduce the notations:
$$D^\pm=\frac{1}{2}(\partial_t\pm\partial_r),\quad Y^\pm=(1,\pm\omega).$$
Consequently, one has
$(\partial_t,\omega \partial_r)=Y^-D^- + Y^+D^+$.
By \eqref{spatial-decomposition}, one can decompose the space-time derivatives
as follows:
\begin{equation} \label{DecomDeriCone}
\partial=(\partial_t,\partial_1,\partial_2)=Y^-D^- + Y^+D^+ +(0,\frac{\omega^{\perp}}{r}\partial_\theta)=Y^-D^- +R,
\end{equation}
where
\begin{equation*}
R=Y^+D^+ +(0,\frac{\omega^{\perp}}{r}\partial_\theta).
\end{equation*}
Here the derivative $D^-$ denotes the bad derivative, $R$ denotes the good derivative.
Simple calculation shows
\begin{equation}
|Ru| \lesssim \text{min}\big\{|\partial u|,|(\partial_t+\partial_r)u|+ |\frac{1}{r}\partial_\theta u| , |\frac{1}{r}\Gamma u| \big\} \label{non-estimate-1},
\end{equation}
and
\begin{equation}
|\partial Ru|,|R^2u|,|D^{\pm} Ru|,|R (Y^{\pm} D^{\pm}) u|
\lesssim \frac{1}{r}(|\partial\Gamma u|+\partial u| ). \label{non-estimate-2}
\end{equation}
Here $\partial Ru,\ R^2u,\ R (Y^{\pm} D^{\pm}) u$ are understood as matrix operators
$\partial\otimes Ru,\ R\otimes Ru,\ R \otimes (Y^{\pm} D^{\pm}) u$, respectively.

Employing the decomposition \eqref{DecomDeriCone}, we organize $N(u,v)$ as follows:
\begin{align}\label{NonLow}
N(u,v)
&=N_{\alpha\beta\mu\nu}
\big\{Y_\alpha^-Y_\beta^- D^-D^- u +  Y_\alpha^-D^-R_\beta u+ R_\alpha(Y^-_\beta D^- u)  +R_\alpha R_\beta u
\big\}                                              \nonumber\\
&\qquad \times\big\{Y_\mu^-Y_\nu^- D^-D^- v +  Y_\mu^-D^-R_\nu v+ R_\mu(Y^-_\nu D^- v)  +R_\mu R_\nu v
\big\}                                                \nonumber\\
&=N_{\alpha\beta\mu\nu} Y_\alpha^-Y_\beta^- Y_\mu^-Y_\nu^- D^-D^- u D^-D^- v  \nonumber\\
&\quad +
N_{\alpha\beta\mu\nu} Y_\alpha^-Y_\beta^- D^-D^- u    [Y_\mu^-D^-R_\nu v+ R_\mu(Y^-_\nu D^- v)  +R_\mu R_\nu v ]  \nonumber\\
&\quad +
N_{\alpha\beta\mu\nu} [Y_\alpha^-D^-R_\beta u+ R_\alpha(Y^-_\beta D^- u) +R_\alpha R_\beta u]  Y_\mu^-Y_\nu^- D^-D^- v\nonumber\\
&\quad  + N_{\alpha\beta\mu\nu} [Y_\alpha^-D^-R_\beta u+ R_\alpha(Y^-_\beta D^- u)  +R_\alpha R_\beta u ]  \nonumber\\
&\qquad\times  [Y_\mu^-D^-R_\nu v+ R_\mu(Y^-_\nu D^- v)  +R_\mu R_\nu v ].
\end{align}
Note $Y^-\in \Sigma$, then thanks to the null condition \eqref{null-condition}, one
immediately has $$N_{\alpha\beta\mu\nu} Y_\alpha^-Y_\beta^- Y_\mu^-Y_\nu^- D^-D^- u D^-D^- v=0.$$
For the last four lines in \eqref{NonLow}, by \eqref{non-estimate-1} and \eqref{non-estimate-2},
it's easy to see that they are bounded by the right hand side of  \eqref{nonlinearity-estimate}.
Thus the lemma is proved.
\end{proof}

We cannot directly use Lemma \ref{Lemma1} in the higher-order energy estimate since it will cause derivative loss problems.
This loss makes the use of the null condition rather delicate in the higher order energy estimate.
Fortunately, we still have the following lemma:
\begin{lem}  \label{Lemma2}
Let $F_1(u)= N_{\alpha\beta\mu\nu}
  \partial_\alpha \Gamma^a  u \partial_\beta \Gamma^a  u \partial_{\mu} \partial_\nu u$,
   $F_2(u)= N_{\alpha\beta\mu\nu}
 \partial_\alpha \Gamma^a  u \partial_\beta \Gamma^a  u \partial_{\mu} \partial_\nu  \partial_t u$,
  $F_3(u)= N_{\alpha\beta\mu\nu}
  \partial_\beta \Gamma^a  u \partial_t \Gamma^a  u \partial_\alpha \partial_{\mu} \partial_\nu u$,
and $1\lesssim r$. Then for all multi-index $a$, there hold
\begin{eqnarray*} \label{Nonlinearity2}
&&|F_1(u)|\lesssim  \sum_{i=1}^2|(\omega_i\partial_t+\partial_i) \Gamma^a  u| |\partial \Gamma^a  u| |\partial^2u|
  +  \frac{1}{r}|\partial_t \Gamma^a  u|^2 ( |\partial \Gamma u|+|\partial u| ), \\
&&|F_2(u)|\lesssim  \sum_{i=1}^2|(\omega_i\partial_t+\partial_i) \Gamma^a  u| |\partial \Gamma^a  u| |\partial^3u|
 +  \frac{1}{r}|\partial_t \Gamma^a  u|^2 ( |\partial^2 \Gamma u|+|\partial^2 u| ) , \\
&&|F_3(u)|\lesssim \sum_{i=1}^2|(\omega_i\partial_t+\partial_i) \Gamma^a  u| |\partial_t \Gamma^a  u| |\partial^3u|
 +  \frac{1}{r}|\partial_t \Gamma^a  u|^2 ( |\partial \Gamma^2 u| +|\partial \Gamma u|+ |\partial u| ).
\end{eqnarray*}
\end{lem}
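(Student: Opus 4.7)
My plan is to use two different splittings of $\partial$ to extract good and bad components, then invoke \eqref{null-condition} to kill the purely-bad term. For every factor of the form $\partial\Gamma^a u$ I will write
\[
\partial_\alpha\Gamma^a u \;=\; Y^-_\alpha\,\partial_t\Gamma^a u \;+\; W_\alpha\Gamma^a u,
\qquad W_0=0,\quad W_i=\omega_i\partial_t+\partial_i,
\]
which is verified directly from $Y^-_0=1$, $Y^-_i=-\omega_i$. The operator $W$ is tangential to the cone, its pointwise size on $\Gamma^a u$ is exactly $|(\omega_i\partial_t+\partial_i)\Gamma^a u|$, and crucially it introduces no additional $\Gamma$, so no derivative loss is incurred. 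For the low-order factor (the one carrying only ordinary derivatives) I will apply the finer splitting $\partial_\alpha=Y^-_\alpha D^-+R_\alpha$ from the proof of Lemma \ref{Lemma1}, whose good part $R$ produces the crucial $\frac{1}{r}$ gain via \eqref{non-estimate-2}.

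For $F_1$, substituting the first splitting into both $\partial_\alpha\Gamma^a u$ and $\partial_\beta\Gamma^a u$ expands $F_1$ into four pieces. In any piece where a $W$ survives on a high-order factor, a direct bound of the form $|(\omega_i\partial_t+\partial_i)\Gamma^a u|\,|\partial\Gamma^a u|\,|\partial^2 u|$ is absorbed into the first term of the claim. The only piece in which $Y^-$ appears on both high-order factors is
\[
N_{\alpha\beta\mu\nu}Y^-_\alpha Y^-_\beta(\partial_t\Gamma^a u)^2\,\partial_\mu\partial_\nu u.
\]
On its low-order factor I apply the second splitting, writing $\partial_\mu\partial_\nu u=Y^-_\mu Y^-_\nu(D^-)^2u+\text{Good}_{\mu\nu}$, where $\text{Good}_{\mu\nu}$ collects terms carrying at least one $R$. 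The fully null contraction $N_{\alpha\beta\mu\nu}Y^-_\alpha Y^-_\beta Y^-_\mu Y^-_\nu$ vanishes by \eqref{null-condition} since $Y^-\in\Sigma$, and the remaining pieces are controlled by $\frac{1}{r}(|\partial\Gamma u|+|\partial u|)$ through \eqref{non-estimate-2}; this yields the second term of the bound.

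The argument for $F_2$ is identical, the low-order factor now being $\partial_\mu\partial_\nu\partial_t u$; after the same decomposition the good part is bounded by $\frac{1}{r}(|\partial\Gamma(\partial_t u)|+|\partial(\partial_t u)|)\lesssim\frac{1}{r}(|\partial^2\Gamma u|+|\partial^2 u|)$, where the commutators $[\Gamma,\partial_t]$ are first-order in $\partial$ and thus absorbed. For $F_3$ only $\partial_\beta\Gamma^a u$ is split by $Y^-_\beta\partial_t+W_\beta$, since the other high-order factor is already the pure $\partial_t\Gamma^a u$. The $W_\beta$ piece supplies the first term of the $F_3$ bound, with $|\partial_t\Gamma^a u|$ taking the place of $|\partial\Gamma^a u|$. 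On the residual piece $N_{\alpha\beta\mu\nu}Y^-_\beta(\partial_t\Gamma^a u)^2\,\partial_\alpha\partial_\mu\partial_\nu u$, I apply the second splitting iteratively at each of the three outer positions of $\partial_\alpha\partial_\mu\partial_\nu u$. The unique fully null contraction vanishes again by \eqref{null-condition}, and every remaining term carries at least one $R$; iterated use of \eqref{non-estimate-2} produces the announced factor $\frac{1}{r}(|\partial\Gamma^2 u|+|\partial\Gamma u|+|\partial u|)$.

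The step I expect to require the most care is the $F_3$ calculation: the $|\partial\Gamma^2 u|$ term on the right-hand side arises only after two applications of $R$, which together formally contribute $\frac{1}{r^2}$ and must be consolidated into the single $\frac{1}{r}$ announced in the lemma (using the hypothesis $r\gtrsim 1$). The commutators between $R_\alpha$ and the angular coefficients $Y^-_\mu$, together with the $[\Gamma,\partial]$ corrections that convert $\partial^2\Gamma u$-type expressions into $\partial\Gamma^2 u$-type expressions, must be tracked carefully so that no residual term escapes the claimed right-hand side.
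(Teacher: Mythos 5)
Your proposal is correct and follows the same core idea as the paper's proof: split each derivative on the high-order factor $\Gamma^a u$ into a piece aligned with the null covector $Y^-$ plus the tangential combinations $\omega_i\partial_t+\partial_i$, split the low-order factor via $\partial=Y^-D^-+R$ exactly as in the proof of Lemma \ref{Lemma1}, and use the null condition to annihilate the unique fully cone-aligned contraction. The difference is organizational rather than conceptual. The paper enumerates cases (a) through (e) according to how many of $\alpha,\beta,\mu,\nu$ equal $0$, carries out the radial/angular decomposition separately in each, and only at the very end collects the left-over pieces $J_{12},J_{22},\dots,J_{03}$ into the single vanishing contraction $N_{\alpha\beta\mu\nu}X_\alpha X_\beta X_\mu X_\nu\,\partial_t\Gamma^a u\,\partial_t\Gamma^a u\,\partial_r^2 u$ with $X=(-1,\omega)\in\Sigma$. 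Your identity $\partial_\alpha\Gamma^a u=Y^-_\alpha\partial_t\Gamma^a u+W_\alpha\Gamma^a u$ with $W_0=0$, $W_i=\omega_i\partial_t+\partial_i$ accomplishes the same thing in one stroke, and since $X$ and $Y^-$ differ only by an overall sign, your null contraction is identical to theirs. This is a cleaner bookkeeping of the same argument. One small caveat you should be aware of, which the paper also glosses over by only giving details for $F_1$: for the $F_3$ terms carrying a single $R$ among the three low-order derivatives, the naive estimate produces $\tfrac{1}{r}|\partial^2\Gamma u|$ rather than the $\tfrac{1}{r}|\partial\Gamma^2 u|$ appearing in the lemma statement; either version suffices for the subsequent energy estimate in Section 4 via Lemma \ref{lemK-S} or Lemma \ref{lemK-S-deriv}, but it is worth noting that the conversion between the two is not a pointwise triviality.
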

The nonlinear terms $F_1,\ F_2$ and $F_3$  are taken from the higher-order energy estimate
in the next section.
The lemma says that due to the null condition,
we can estimate $F_1,\ F_2$ and $F_3$  by good derivative $\partial_t+\partial_r$ with some
good remainders .
\begin{proof}
Since the proof of $F_1, F_2, F_3$ is similar, we only give the details concerning $F_1$ and leave $F_2$ and $F_3$ to the interested readers.

\textbf{Case a:} all of $\alpha, \beta, \mu, \nu \in \{1,2\}$. By \eqref{spatial-decomposition}, we rearrange $F$ as follows:
\begin{eqnarray*}
&&N_{\alpha\beta\mu\nu}
  \partial_\alpha \Gamma^a  u \partial_\beta \Gamma^a  u  \partial_{\mu} \partial_{\nu} u   \\
&&= \big\{N_{\alpha\beta\mu\nu}
(\omega_\alpha\partial_t+\partial_\alpha) \Gamma^a  u \partial_\beta \Gamma^a  u  \partial_{\mu} \partial_{\nu} u
- N_{\alpha\beta\mu\nu}
\omega_\alpha \partial_t \Gamma^a  u (\omega_\beta\partial_t+\partial_\beta) \Gamma^a  u  \partial_{\mu} \partial_{\nu} u \big\}\\
&&\quad+N_{\alpha\beta\mu\nu}
\omega_\alpha\omega_\beta\omega_\mu\omega_\nu \partial_t \Gamma^a  u \partial_t \Gamma^a  u  \partial_r \partial_r u \\
&&\quad+ \big\{N_{\alpha\beta\mu\nu} \omega_\alpha\omega_\beta  \partial_t \Gamma^a  u\partial_t \Gamma^a  u
\Large[ \omega_\mu\partial_r(\frac{1}{r}\omega^\perp_\nu\partial_\theta)u+\frac{1}{r}\omega^\perp_\mu\partial_\theta
(\omega_\nu\partial_r+\frac{1}{r}\omega^\perp_\nu\partial_\theta)u \Large] \big\}\\
&&=J_{11}+J_{12}+J_{13}.
\end{eqnarray*}
Simple calculation shows
$$|J_{11}| \lesssim \sum_{i=1}^2|(\omega_i\partial_t+\partial_i) \Gamma^a  u| |\partial \Gamma^a  u| |\partial^2u|,$$
$$|J_{13}| \lesssim
\frac{1}{r}|\partial_t \Gamma^a  u|^2 ( |\partial \Gamma u|+|\partial u| )  .$$

\textbf{Case b:} one of $\alpha, \beta, \mu, \nu $ is 0.

$1)$ $\alpha=0, \beta, \mu, \nu \in \{1,2\}$ or $\beta=0, \alpha, \mu, \nu \in \{1,2\}$.
By the symmetry of the nonlinearities and \eqref{spatial-decomposition}, one gets
\begin{eqnarray*}
&& N_{0\beta\mu\nu} \partial_t \Gamma^a  u \partial_\beta \Gamma^a  u \partial_{\mu} \partial_{\nu} u +
 N_{\alpha 0\mu\nu} \partial_\alpha \Gamma^a  u \partial_t \Gamma^a  u \partial_{\mu} \partial_{\nu} u \\
&&=2 N_{0\beta\mu\nu} \partial_t \Gamma^a  u \partial_\beta \Gamma^a  u \partial_{\mu} \partial_{\nu} u \\
&&=2 N_{0\beta\mu\nu}
 \partial_t \Gamma^a  u (\omega_\beta\partial_t+\partial_\beta) \Gamma^a  u  \partial_{\mu} \partial_{\nu} u\\
&&\quad-2N_{0\beta\mu\nu}
\omega_\beta\omega_\mu\omega_\nu \partial_t \Gamma^a  u \partial_t \Gamma^a  u  \partial_r \partial_r u \\
&&\quad+\big\{- 2N_{0\beta\mu\nu}  \omega_\beta \partial_t \Gamma^a  u\partial_t \Gamma^a  u
\Large[ \omega_\mu\partial_r(\frac{1}{r}\omega^\perp_\nu\partial_\theta)u+\frac{1}{r}\omega^\perp_\mu\partial_\theta
(\omega_\nu\partial_r+\frac{1}{r}\omega^\perp_\nu\partial_\theta)u \Large] \big\}\\
&&=J_{21}+J_{22}+J_{23}.
\end{eqnarray*}
Similarly,
\begin{equation*}
|J_{21}| \lesssim \sum_{i=1}^2|(\omega_i\partial_t+\partial_i) \Gamma^a  u| |\partial \Gamma^a  u| |\partial^2u|,
\end{equation*}
\begin{equation*}
|J_{23}| \lesssim
\frac{1}{r}|\partial_t \Gamma^a  u|^2 ( |\partial \Gamma u|+|\partial u| )  .
\end{equation*}

$2)$ $\mu=0, \alpha, \beta, \nu \in \{1,2\}$ or $\nu=0, \alpha, \beta, \mu \in \{1,2\}$. By the symmetry of the nonlinearities and \eqref{spatial-decomposition}, we have
\begin{eqnarray*}
&& N_{\alpha\beta0\nu}
 \partial_\alpha \Gamma^a  u \partial_\beta \Gamma^a  u  \partial_{t} \partial_{\nu} u
+ N_{\alpha\beta\mu0}
 \partial_\alpha \Gamma^a  u \partial_\beta \Gamma^a  u  \partial_{t} \partial_{\mu} u\\
&&=2 N_{\alpha\beta0\nu}
 \partial_\alpha \Gamma^a  u \partial_\beta \Gamma^a  u  \partial_{t} \partial_{\nu} u\\
&&= \big\{2N_{\alpha\beta0\nu}
 (\omega_\alpha\partial_t+\partial_\alpha) \Gamma^a  u \partial_\beta \Gamma^a  u  \partial_{t} \partial_{\nu} u
- 2N_{\alpha\beta0\nu}
 \omega_\alpha\partial_t \Gamma^a  u (\omega_\beta\partial_t+\partial_\beta) \Gamma^a  u  \partial_{t} \partial_{\nu} u \big\}\\
&&\quad+ 2N_{\alpha\beta0\nu}
 \omega_\alpha\omega_\beta\partial_t \Gamma^a  u \partial_t \Gamma^a  u      \partial_{\nu}(\partial_{t}+\partial_{r}) u\\
&&\quad - 2N_{\alpha\beta0\nu}
  \omega_\alpha\omega_\beta\omega_\nu\partial_t \Gamma^a  u \partial_t \Gamma^a  u  \partial_{r} \partial_r u\\
&&\quad - 2N_{\alpha\beta0\nu}
  \omega_\alpha\omega_\beta\partial_t \Gamma^a  u \partial_t \Gamma^a  u
  (\frac{\omega^\perp_{\nu}}{r}\partial_\theta)\partial_{r} u\\
 &&= J_{31}+J_{32}+J_{33}+J_{34}.
\end{eqnarray*}
As before,
\begin{equation*}
|J_{31}| \lesssim \sum_{i=1}^2|(\omega_i\partial_t+\partial_i) \Gamma^a  u| |\partial \Gamma^a  u| |\partial^2u|.
\end{equation*}
Exploiting the good derivative \eqref{Gamma1}, we get
\begin{equation*}
|J_{32}+J_{34}| \lesssim
\frac{1}{r}|\partial_t \Gamma^a  u|^2 ( |\partial \Gamma u|+|\partial u| ) .
\end{equation*}

\textbf{Case c:} two of $\alpha, \beta, \mu, \nu $ is 0.

$1)$ $\mu, \nu=0, \alpha, \beta \in \{1,2\}$. By \eqref{spatial-decomposition}, then
\begin{eqnarray*}
&& N_{\alpha\beta00}
 \partial_\alpha \Gamma^a  u \partial_\beta \Gamma^a  u \partial_{t} \partial_{t} u\\
&&= \big\{N_{\alpha\beta00}
 (\omega_\alpha\partial_t+\partial_\alpha) \Gamma^a  u \partial_\beta \Gamma^a  u  \partial_{t} \partial_{t} u
- N_{\alpha\beta00}
\omega_\alpha\partial_t \Gamma^a  u (\omega_\beta\partial_t+\partial_\beta) \Gamma^a  u \partial_{t} \partial_{t} u \big\}\\
&&\quad+\big\{ N_{\alpha\beta00}
 \omega_\alpha\omega_\beta\partial_t \Gamma^a  u \partial_t \Gamma^a  u
 \Large[\partial_{t} (\partial_{t} +\partial_{r} )u - \partial_{r} (\partial_{t} +\partial_{r} )u \Large] \big\}\\
&&\quad+ N_{\alpha\beta00}
\omega_\alpha\omega_\beta\partial_t \Gamma^a  u \partial_t \Gamma^a  u \partial_r \partial_r u          \\
&&=J_{41}+J_{42}+J_{43}.
\end{eqnarray*}
We have
\begin{equation*}
|J_{41}| \lesssim \sum_{i=1}^2|(\omega_i\partial_t+\partial_i) \Gamma^a  u| |\partial \Gamma^a  u| |\partial^2u|.
\end{equation*}
By \eqref{Gamma1}, we get
\begin{equation*}
|J_{42}| \lesssim \frac{1}{r}|\partial_t \Gamma^a  u|^2 ( |\partial \Gamma u|+|\partial u| ) .
\end{equation*}
$2)$ $\alpha, \beta=0, \mu, \nu \in \{1,2\}$. By \eqref{spatial-decomposition}, then
\begin{eqnarray*}
&& N_{00\mu\nu}
 \partial_t \Gamma^a  u \partial_t \Gamma^a  u  \partial_{\mu} \partial_{\nu} u\\
&&=N_{00\mu\nu}
\omega_\mu\omega_\nu \partial_t \Gamma^a  u \partial_t \Gamma^a  u  \partial_r \partial_r u \\
&&\quad+ \big\{N_{00\mu\nu}  \partial_t \Gamma^a  u\partial_t \Gamma^a  u
\Large[ \omega_\mu\partial_r(\frac{1}{r}\omega^\perp_\nu\partial_\theta)u+\frac{1}{r}\omega^\perp_\mu\partial_\theta
(\omega_\nu\partial_r+\frac{1}{r}\omega^\perp_\nu\partial_\theta)u \Large] \big\}\\
&&=J_{51}+J_{52}.
\end{eqnarray*}
Obviously,
\begin{equation*}
|J_{52}| \lesssim
\frac{1}{r}|\partial_t \Gamma^a  u|^2 ( |\partial \Gamma u|+|\partial u| ) .
\end{equation*}

$3)$ $\beta, \nu=0, \alpha, \mu \in \{1,2\}$. By \eqref{spatial-decomposition}, then
\begin{eqnarray*}
&&N_{\alpha0\mu0}
  \partial_t \Gamma^a  u \partial_\alpha \Gamma^a  u  \partial_{\mu} \partial_t u   \\
&&=N_{\alpha0\mu0}
  \partial_t \Gamma^a  u (\omega_\alpha\partial_t+\partial_\alpha) \Gamma^a  u  \partial_{\mu} \partial_t u
-N_{\alpha0\mu0}
  \omega_\alpha \partial_t \Gamma^a  u\partial_t \Gamma^a  u  \partial_\mu (\partial_t+\partial_r) u \\
&&\quad +N_{\alpha0\mu0}
  \omega_\alpha \omega_{\mu}\partial_t \Gamma^a  u\partial_t \Gamma^a  u  \partial_r \partial_r u
+N_{\alpha0\mu0}
  \omega_\alpha\partial_t \Gamma^a  u\partial_t \Gamma^a  u   \frac{1}{r}\omega^\perp_{\mu} \partial_\theta\partial_r u\\
&&=J_{61}+J_{62}+J_{63}+J_{64}.
\end{eqnarray*}
Obviously,
\begin{equation*}
|J_{61}| \lesssim \sum_{i=1}^2|(\omega_i\partial_t+\partial_i) \Gamma^a  u| |\partial \Gamma^a  u| |\partial^2u|,
\end{equation*}
and by \eqref{Gamma1}, one gets
\begin{equation*}
|J_{62}+J_{64}| \lesssim \frac{1}{r}|\partial_t \Gamma^a  u|^2 ( |\partial \Gamma u|+|\partial u| ).
\end{equation*}

$4)$ $\beta, \mu=0, \alpha, \nu \in \{1,2\}$,

$5)$ $\alpha, \mu=0, \beta, \nu \in \{1,2\}$,

$6)$ $\alpha, \nu=0, \beta, \mu \in \{1,2\}$,

Since the estimates of $4), 5), 6)$ are similar to $3)$ in \textbf{Case c}, we formulate them together as follows:
\begin{eqnarray*}
&&N_{\alpha00\nu}
   \partial_\alpha \Gamma^a  u \partial_t \Gamma^a  u   \partial_t  \partial_{\nu} u
+N_{0\beta0\nu}
  \partial_t \Gamma^a  u \partial_\beta \Gamma^a  u  \partial_t  \partial_{\nu} u
+N_{0\beta\mu0}
  \partial_t \Gamma^a  u \partial_\beta \Gamma^a  u \partial_{\mu} \partial_t u  \\
&&=\big\{N_{\alpha00\nu} \omega_\alpha \omega_{\nu}
   \partial_t \Gamma^a  u \partial_t \Gamma^a  u   \partial_r  \partial_r u
+N_{0\beta0\nu} \omega_\beta \omega_{\nu}
  \partial_t \Gamma^a  u \partial_t \Gamma^a  u  \partial_r  \partial_r u \\
&&\qquad+N_{0\beta\mu0} \omega_\beta \omega_{\mu}
  \partial_t \Gamma^a  u \partial_t \Gamma^a  u  \partial_r \partial_r u \big\} \\
&&\quad+\big\{
3N_{\alpha00\nu}
  \partial_t \Gamma^a  u (\omega_\alpha\partial_t+\partial_\alpha) \Gamma^a  u  \partial_{\nu} \partial_t u
-3N_{\alpha00\nu}
  \omega_\alpha \partial_t \Gamma^a  u\partial_t \Gamma^a  u  \partial_\nu (\partial_t+\partial_r) u \\
&&\qquad+3N_{\alpha00\nu}
  \omega_\alpha\partial_t \Gamma^a  u\partial_t \Gamma^a  u   \frac{1}{r}\omega^\perp_{\nu} \partial_\theta\partial_r u
  \big\}\\
&&=J_{71}+J_{72}.
\end{eqnarray*}
Similarly,
\begin{equation*}
|J_{72}| \lesssim \sum_{i=1}^2|(\omega_i\partial_t+\partial_i) \Gamma^a  u| |\partial \Gamma^a  u| |\partial^2u|+
\frac{1}{r}|\partial_t \Gamma^a  u|^2 ( |\partial \Gamma u|+|\partial u| ) .
\end{equation*}

\textbf{Case d:} three of $\alpha, \beta, \mu, \nu $ are 0.

$1)$ $\alpha, \beta, \mu=0,  \nu  \in\{1,2\}$ or $\alpha, \beta, \nu=0,  \mu  \in\{1,2\}$. Owning to the symmetry of the nonlinearities and \eqref{spatial-decomposition}, there holds
\begin{eqnarray*}
&&N_{000\nu}
  \partial_t \Gamma^a  u \partial_t \Gamma^a  u  \partial_t\partial_{\nu}  u
+N_{00\mu0}
  \partial_t \Gamma^a  u \partial_t \Gamma^a  u  \partial_t\partial_{\mu}  u   \\
&&=2N_{00\mu0}
  \partial_t \Gamma^a  u \partial_t \Gamma^a  u  \partial_t\partial_{\mu}  u   \\
&&=2N_{00\mu0}
  \partial_t \Gamma^a  u \partial_t \Gamma^a  u \frac{\omega_\mu^\perp}{r} \partial_t\partial_\theta  u
+2N_{00\mu0}  \omega_\mu
  \partial_t \Gamma^a  u \partial_t \Gamma^a  u  \partial_r(\partial_r+\partial_t)  u\\
&&\quad-2N_{00\mu0}  \omega_\mu
  \partial_t \Gamma^a  u \partial_t \Gamma^a  u  \partial_r\partial_r  u\\
&&=J_{81}+J_{82}+J_{83}.
\end{eqnarray*}
Similarly, by \eqref{Gamma1}, there holds
\begin{equation*}
|J_{81}+J_{82}| \lesssim \frac{1}{r}|\partial_t \Gamma^a  u|^2 ( |\partial \Gamma u|+|\partial u| )  .
\end{equation*}

$2)$ $\beta, \mu, \nu=0,  \alpha  \in\{1,2\}$ or $\alpha, \mu, \nu=0,  \beta  \in\{1,2\}$,
\begin{eqnarray*}
&&N_{\alpha000}
  \partial_\alpha \Gamma^a  u \partial_t \Gamma^a  u  \partial_t\partial_t  u
  +N_{0\beta00}
  \partial_t \Gamma^a  u \partial_\beta \Gamma^a  u  \partial_t\partial_t  u   \\
&&=2N_{\alpha000}
  \partial_\alpha \Gamma^a  u \partial_t \Gamma^a  u  \partial_t\partial_t  u   \\
&&=2N_{\alpha000}
  (\omega_\alpha\partial_t+\partial_\alpha) \Gamma^a  u \partial_t \Gamma^a  u  \partial_t\partial_t  u
- 2N_{\alpha000}
  \omega_\alpha\partial_t \Gamma^a  u \partial_t \Gamma^a  u  \partial_t(\partial_t+\partial_r)  u \\
 &&\quad+ 2N_{\alpha000}
  \omega_\alpha\partial_t \Gamma^a  u \partial_t \Gamma^a  u  (\partial_t+\partial_r)\partial_r  u
 - 2N_{\alpha000}
  \omega_\alpha\partial_t \Gamma^a  u \partial_t \Gamma^a  u  \partial_r\partial_r  u  \\
&&=J_{91}+J_{92}+J_{93}+J_{94}.
\end{eqnarray*}
Easily,
\begin{equation*}
|J_{91}| \lesssim \sum_{i=1}^2|(\omega_i\partial_t+\partial_i) \Gamma^a  u| |\partial \Gamma^a  u| |\partial^2u|,
\end{equation*}
and by \eqref{Gamma1}, one gets
\begin{equation*}
|J_{92}+J_{93}| \lesssim \frac{1}{r}|\partial_t \Gamma^a  u|^2 ( |\partial \Gamma u|+|\partial u| )  .
\end{equation*}
\textbf{Case e:} $\alpha=\beta=\mu=\nu=0 $.
\begin{eqnarray*}
&&N_{0000}
\partial_t \Gamma^a  u \partial_t \Gamma^a  u  \partial_t\partial_t  u   \\
&&= N_{0000}
 \partial_t \Gamma^a  u \partial_t \Gamma^a  u  \partial_t(\partial_t+\partial_r)  u
  - N_{0000}
  \partial_t \Gamma^a  u \partial_t \Gamma^a  u  (\partial_t+\partial_r)\partial_r  u\\
&&\quad+ N_{0000}
 \partial_t \Gamma^a  u \partial_t \Gamma^a  u  \partial_r\partial_r  u  \\
&&=J_{01}+J_{02}+J_{03}.
\end{eqnarray*}
Obviously, by \eqref{Gamma1}, there holds
\begin{equation*}
|J_{01}+J_{02}| \lesssim \frac{1}{r}|\partial_t \Gamma^a  u|^2 ( |\partial \Gamma u|+|\partial u| ) .
\end{equation*}
Now we have taken care of all the terms except for
$J_{12},\ J_{22},\ J_{33},\ J_{43},\ J_{51},\ J_{63},\ J_{71},\ J_{83},\ J_{94}$ and $J_{03}$.
Each of them can not be bounded by some good derivative.
However, note that the sum of them is
\begin{eqnarray*}
&&J_{12}+J_{22}+J_{33}+J_{43}+J_{51}+J_{63}+J_{71}+J_{83}+J_{94}+J_{03}\\
&&=N_{\alpha\beta\mu\nu}X_\alpha X_\beta X_\mu X_\nu \partial_t \Gamma^a  u \partial_t \Gamma^a  u  \partial_r\partial_r u.
\end{eqnarray*}
Here $X=(-1,\omega_1,\omega_2)\in\Sigma$,
thus they vanish by the null condition \eqref{null-condition}. This completes the proof of the lemma.
\end{proof}

\section{Energy Estimate}
This section is devoted to the energy estimate. We split the proof into two subsections, which correspond to the higher-order energy estimate and the lower-order energy estimate, respectively.

In Theorem \ref{thm}, by taking appropriate small $\epsilon_0$, we can assume
$E_{k-1}\ll 1$, which will be always assumed in the following argument.

\subsection{Higher-order Energy Estimate}
In this subsection, we perform the higher-order energy estimate.
Apart from the usual energy estimate for wave equations, we will
see that the ghost weight method of Alinhac plays an important role in our argument.

Let $k\geq8$, $|a |\leq k-1$, $\sigma =r-t$ and $q(\sigma)=\arctan \sigma$. We write $e^q=e^{q(\sigma)}$ for simplicity of presentation. Taking the $L^2$ inner product of the equations \eqref{wave-Gamma} with
$e^{q}\partial_t \Gamma^a  u $. Then employing integration by parts, we have
\begin{eqnarray}\label{oo}
&&\frac12\frac{d}{dt} \int_{\mathbb{R}^2}
{e^{q}(|\partial_t\Gamma^a  u|^2+|\nabla\Gamma^a  u|^2 )} dx \nonumber\\
&&+\frac12\sum_{i=1}^2\int_{\mathbb{R}^2}{ \frac{e^{q} }{1+\sigma^2}      |(\omega_i\partial_t+\partial_i)\Gamma^a  u|^2 } dx\nonumber\\
&&=\int_{\mathbb{R}^2} \sum_{b +c+d = a }
N_d(\Gamma^b  u,\Gamma^c u) \partial_t \Gamma^a  u e^qdx.
\end{eqnarray}
At first sight, we will always lose one derivative for the highest order terms.
Fortunately, the symmetry of the nonlinearities enables us to circumvent this problem.
The price we pay is to lose $\langle t \rangle^{-1}$ decay rate, which will be
compensated by the ghost weight method.
We remark that the estimate for the highest order terms is the only spot
where the ghost weight energy is used.

For the highest order term $N_0(\Gamma^b  u,\Gamma^c u)$ in \eqref{oo}, we only treat the case for $b=a$.
The counterpart case for $c=a$ can be estimated exactly in the same way, we omit the details here.
 Using integration by parts, one has
\begin{align}\label{high-oder-1}
&\int_{\mathbb{R}^2}  N_0(\Gamma^a  u, u) \partial_t \Gamma^a  u e^{q}dx\nonumber\\
&=\int_{\mathbb{R}^2}
N_{\alpha\beta\mu\nu} \partial_\alpha  \partial_\beta  \Gamma^a  u  \partial_{\mu} \partial_{\nu} u
\partial_t \Gamma^a  u e^q dx \nonumber\\
&=\int_{\mathbb{R}^2}
N_{\alpha\beta\mu\nu} \partial_\alpha  ( \partial_\beta  \Gamma^a  u  \partial_{\mu} \partial_{\nu} u
\partial_t \Gamma^a  u e^q )dx
-\int_{\mathbb{R}^2}
N_{\alpha\beta\mu\nu}  \partial_\beta  \Gamma^a  u
 \partial_\alpha  \partial_{\mu} \partial_{\nu} u\partial_t \Gamma^a  u e^q dx \nonumber\\
&\quad-\int_{\mathbb{R}^2}
N_{\alpha\beta\mu\nu}  \partial_\beta  \Gamma^a  u
\partial_{\mu} \partial_{\nu} u\partial_\alpha \partial_t \Gamma^a  u e^q dx
-\int_{\mathbb{R}^2}
N_{\alpha\beta\mu\nu}  \partial_\beta  \Gamma^a  u
\partial_{\mu} \partial_{\nu} u\partial_t \Gamma^a  u \partial_\alpha  e^qdx.
\end{align}
The first term on the right hand side of \eqref{high-oder-1} will be
absorbed into the generalized energy.
The second term has no derivative loss problem and the null condition is present.
The third and the forth term need further attention.

For the third term on the right hand side of \eqref{high-oder-1},
by the symmetry of the nonlinearities, we write
\begin{eqnarray}\label{high-oder-2}
&&-\int_{\mathbb{R}^2}
N_{\alpha\beta\mu\nu}  \partial_\beta  \Gamma^a  u
\partial_{\mu} \partial_{\nu} u\partial_\alpha \partial_t \Gamma^a  u e^{q}dx \nonumber \\
&&=-\frac{1}{2}\int_{\mathbb{R}^2}
N_{\alpha\beta\mu\nu}
\partial_{\mu} \partial_{\nu} u \partial_t (\partial_\beta  \Gamma^a  u \partial_\alpha  \Gamma^a  u ) e^q dx\nonumber\\
&&=-\frac{1}{2} \partial_t\int_{\mathbb{R}^2} N_{\alpha\beta\mu\nu}
\partial_{\mu} \partial_{\nu} u \partial_\beta  \Gamma^a  u \partial_\alpha  \Gamma^a  u e^q dx
+\frac{1}{2} \int_{\mathbb{R}^2} N_{\alpha\beta\mu\nu}
\partial_t \partial_{\mu} \partial_{\nu} u \partial_\beta  \Gamma^a  u \partial_\alpha  \Gamma^a  u e^qdx \nonumber\\
&&\quad+\frac{1}{2}\int_{\mathbb{R}^2} N_{\alpha\beta\mu\nu}
 \partial_{\mu} \partial_{\nu} u \partial_\beta  \Gamma^a  u \partial_\alpha  \Gamma^a  u \partial_t e^qdx.
\end{eqnarray}
For the fourth term on the right hand side of \eqref{high-oder-1},
we organize them as follows:
\begin{align}\label{high-oder-3}
&
-N_{\alpha\beta\mu\nu}  \partial_\beta  \Gamma^a  u
\partial_{\mu} \partial_{\nu} u\partial_t \Gamma^a  u \partial_\alpha  e^q  \nonumber\\
&=
-N_{0\beta\mu\nu}  \partial_\beta  \Gamma^a  u
\partial_{\mu} \partial_{\nu} u\partial_t \Gamma^a  u \partial_t e^q
-\sum_{i=1}^2N_{i\beta\mu\nu}  \partial_\beta  \Gamma^a  u
\partial_{\mu} \partial_{\nu} u\partial_t \Gamma^a  u \partial_i e^q  \nonumber\\
&\quad
-\sum_{i=1}^2N_{i\beta\mu\nu}  \partial_\beta  \Gamma^a  u
\partial_{\mu} \partial_{\nu} u\partial_i \Gamma^a  u \partial_t e^q
+\sum_{i=1}^2N_{i\beta\mu\nu}  \partial_\beta  \Gamma^a  u
\partial_{\mu} \partial_{\nu} u\partial_i \Gamma^a  u \partial_t e^q  \nonumber\\
&=-N_{\alpha\beta\mu\nu}  \partial_\beta  \Gamma^a  u
\partial_{\mu} \partial_{\nu} u\partial_\alpha  \Gamma^a  u \partial_t e^{q}
-\sum_{i=1}^2N_{i\beta\mu\nu}  \partial_\beta  \Gamma^a  u
\partial_{\mu} \partial_{\nu} u (\omega_i\partial_t+\partial_i) \Gamma^a  u
 \frac{e^{q} }{1+\sigma^2}.
\end{align}
Combining the above \eqref{high-oder-1}$-$\eqref{high-oder-3}, we derive that
\begin{align}
&\int_{\mathbb{R}^2}
N_{\alpha\beta\mu\nu} \partial_\alpha  \partial_\beta  \Gamma^a  u  \partial_{\mu} \partial_{\nu} u
\partial_t \Gamma^a  u e^{q} dx  \nonumber\\
&=\partial_t\int_{\mathbb{R}^2}
N_{0\beta\mu\nu}   \partial_\beta  \Gamma^a  u  \partial_{\mu} \partial_{\nu} u
\partial_t \Gamma^a  u e^{q} dx
-\int_{\mathbb{R}^2}
N_{\alpha\beta\mu\nu} \partial_\beta  \Gamma^a  u
 \partial_\alpha  \partial_{\mu} \partial_{\nu} u\partial_t \Gamma^a  u e^{q} dx \nonumber\\
&\quad -\frac{1}{2} \partial_t\int_{\mathbb{R}^2} N_{\alpha\beta\mu\nu}
\partial_{\mu} \partial_{\nu} u \partial_\beta  \Gamma^a  u \partial_\alpha  \Gamma^a  u e^{q} dx
+\frac{1}{2} \int_{\mathbb{R}^2} N_{\alpha\beta\mu\nu}
\partial_t \partial_{\mu} \partial_{\nu} u \partial_\beta  \Gamma^a  u \partial_\alpha  \Gamma^a  u e^{q} dx \nonumber\\
&\quad-\frac{1}{2}\int_{\mathbb{R}^2} N_{\alpha\beta\mu\nu}
 \partial_{\mu} \partial_{\nu} u \partial_\beta  \Gamma^a  u \partial_\alpha  \Gamma^a  u \partial_te^{q} dx \nonumber\\
&\quad -\sum_{i=1}^2\int_{\mathbb{R}^2} N_{i\beta\mu\nu}  \partial_\beta  \Gamma^a  u
\partial_{\mu} \partial_{\nu} u (\omega_i\partial_t+\partial_i) \Gamma^a  u   \frac{e^{q} }{1+\sigma^2} dx \nonumber\\
&=I_1+I_2+I_3+I_4+I_5+I_6.
\end{align}
In the sequel, we will estimate $I_1$ to $I_6$ one by one.

Denote the ghost weight energy by
\begin{equation*}
G(t)=\sum_{i=1}^2\int_{\mathbb{R}^2}{ \frac{e^{q} }{1+\sigma^2}      |(\omega_i\partial_t+\partial_i)\Gamma^a  u|^2 } dx.
\end{equation*}
In view of Lemma \ref{Lemma2}, we have
\begin{align}\label{Estimate-I2456}
&I_2+I_4+I_5+I_6 \nonumber\\
&\lesssim \int_{r\geq \langle t\rangle /2}
\sum_{i=1}^2\Large|(\omega_i\partial_t+\partial_i) \Gamma^a  u \partial \Gamma^a  u \Large| \Large(|\partial^3u|+|\partial^2u|\Large)\nonumber\\
&\qquad +\frac{1}{r}|\partial_t \Gamma^a  u|^2
\Large( |\partial \Gamma^2 u| +|\partial \Gamma u|+ |\partial u| \Large)dx\nonumber\\
&\quad +\int_{r\leq \langle t\rangle /2}  |\partial \Gamma^a  u|^2 (|\partial^3u|+|\partial^2u| )dx.
\end{align}
Here we have divided the integral domain $\mathbb{R}^2$ into two domains.
By Lemma \ref{lemK-S} and H\"{o}lder inequality, the first two lines
on the right hand side of \eqref{Estimate-I2456} can be estimated by
\begin{eqnarray*}
&&\int_{r\geq \langle t\rangle /2}
\sum_{i=1}^2\Large|(\omega_i\partial_t+\partial_i) \Gamma^a  u \partial \Gamma^a  u \Large| \Large(|\partial^3u|+|\partial^2u|\Large)\nonumber\\
&&\quad +  \frac{1}{r}|\partial_t \Gamma^a  u|^2
\Large( |\partial \Gamma^2 u| +|\partial \Gamma u|+ |\partial u| \Large)dx\nonumber\\
&& \lesssim
\sum_{i=1}^2 \big\| \frac{(\omega_i\partial_t+\partial_i) \Gamma^a  u}{\langle r-t\rangle} \big\|_{L^2}
 \|\partial \Gamma^a  u\|_{L^2} \|\langle r-t\rangle
\large(|\partial^3u|+|\partial^2u|\large)\|_{L^\infty} \nonumber\\
&&\quad +  \langle t\rangle^{-1} \big\| |\partial \Gamma^a  u|^2 \big\|_{L^1} \|\partial \Gamma^2 u\|_{L^\infty} \\
&&\lesssim  C_\eta\langle t \rangle^{-1}E_{k}E_{k-1} +\eta G.
\end{eqnarray*}
Here $\eta$ is a constant which can be chosen to be any positive number,
$C_\eta$ is a constant depends only on $\eta$ and $k$.

For the last line of \eqref{Estimate-I2456},
we derive from Lemma \ref{lemK-S-deriv} that
\begin{eqnarray*}
&&\int_{r\leq \langle t\rangle /2}  |\partial \Gamma^a  u|^2 (|\partial^3u|+|\partial^2u| )dx \nonumber\\
&& \leq
\big\| |\partial \Gamma^a  u|^2 \big\|_{L^1}
\big\| |\partial^3u|+|\partial^2u| \big\|_{L^\infty(r\leq \langle t\rangle /2)} \nonumber\\
&&\lesssim  \langle t\rangle^{-\frac 32}E_{k}E_{k-1}^{\frac 12}.
\end{eqnarray*}
Then inserting the above two estimates into \eqref{Estimate-I2456},
one gets
\begin{equation}\label{I2456}
I_2+I_4+I_5+I_6\lesssim  C_\eta\langle t \rangle^{-1}E_{k}E_{k-1}^{\frac12} +\eta G.
\end{equation}
Here we have used the assumption $E_{k-1}\ll 1$.

Next, we are going to take care of $I_1$ and $I_3$.
They will be absorbed by the energy as a lower order perturbation.
Denote
\begin{eqnarray}\label{Estimate-purturbation1}
&&\tilde{E}_a(u(t))=\frac 12 \int_{\mathbb{R}^2} e^{q(\sigma)} |\partial\Gamma^a  u|^2 dx-
\int_{\mathbb{R}^2}
N_{0\beta\mu\nu}   \partial_\beta  \Gamma^a  u  \partial_{\mu} \partial_{\nu} u
\partial_t \Gamma^a  u e^{q} dx  \nonumber\\
&&\qquad\qquad\quad +\frac{1}{2} \int_{\mathbb{R}^2} N_{\alpha\beta\mu\nu}
\partial_{\mu} \partial_{\nu} u \partial_\beta  \Gamma^a  u \partial_\alpha  \Gamma^a  u e^{q} dx .
\end{eqnarray}
The reason why we define $\tilde{E}_a(u(t))$ is that one moves $I_1$ and $I_3$ to the left hand side of \eqref{oo}, then one will get $\tilde{E}_a(u(t))$. As is indicated in the beginning of this section, we assume $E_{k-1}\ll 1$. Then
\begin{eqnarray}
&&\Large|
\frac{1}{2} \int_{\mathbb{R}^2} N_{\alpha\beta\mu\nu}
\partial_{\mu} \partial_{\nu} u \partial_\beta  \Gamma^a  u \partial_\alpha  \Gamma^a  u e^{q} dx
-\int_{\mathbb{R}^2}
N_{0\beta\mu\nu}   \partial_\beta  \Gamma^a  u  \partial_{\mu} \partial_{\nu} u
\partial_t \Gamma^a  u e^{q} dx
\Large| \nonumber\\
&&\lesssim \int_{\mathbb{R}^2}|\partial\Gamma^a u|^2|\partial^2u| dx
\leq E_{k-1}^{\frac12} \int_{\mathbb{R}^2} |\partial\Gamma^a u|^2 dx
\ll  \int_{\mathbb{R}^2} |\partial\Gamma^a u|^2 dx .
\end{eqnarray}
On the other hand, note that
\begin{equation}
\int_{\mathbb{R}^2}{e^{q}(|\partial_t\Gamma^a  u|^2+|\nabla\Gamma^a  u|^2 )} dx \sim
\int_{\mathbb{R}^2}|\partial_t\Gamma^a  u|^2+|\nabla\Gamma^a  u|^2 dx.
\end{equation}
Hence one deduces that
\begin{equation} \label{Estimate-purturbation2}
\int_{\mathbb{R}^2} |\partial\Gamma^a  u|^2 dx  \sim \tilde{E}_a(u(t)). 
\end{equation}
Combining \eqref{oo}-\eqref{Estimate-purturbation2}, we have
\begin{eqnarray*}
&&\frac{d}{dt} \int_{\mathbb{R}^2}{(|\partial_t\Gamma^a  u|^2+|\nabla\Gamma^a  u|^2 )} dx
+G(t)\\
&&\lesssim \int_{\mathbb{R}^2} \sum_{b +c+d = a ,d\neq0}  N_d(\Gamma^b  u,\Gamma^c u) \partial_t \Gamma^a  u e^{q}dx\\
&&\quad +\langle t \rangle^{-1}E_{k}E_{k-1}^{\frac 12} + \eta G.
\end{eqnarray*}
Let $\eta$ be small enough, then $\eta G$ on the right hand of the above inequality will be absorbed by the left-hand side,
thus we are arriving at
\begin{equation*}
\frac{d}{dt} \int_{\mathbb{R}^2} |\partial \Gamma^a  u|^2 dx
\lesssim \int_{\mathbb{R}^2} \sum_{b +c+d = a ,d\neq0}  N_d(\Gamma^b  u,\Gamma^c u) \partial_t \Gamma^a  u e^{q}dx
+\langle t \rangle^{-1}E_{k}E_{k-1}^{\frac 12}.
\end{equation*}
Now we are going to estimate the remaining lower order terms. We still split the integral domains into two parts.
First, for $r\leq \langle t\rangle /2$, there holds $\langle r-t\rangle \lesssim \langle t\rangle$.
Since $|b| +|c| < |a| $, without loss of generality, we assume $|b | \leq [|a |/2]$. By Lemma \ref{lemK-S-deriv},
we deduce that
\begin{eqnarray*}
&&\sum_{\tiny\begin{matrix} |b|+|c|< |a|\\ d \neq 0 \end{matrix}} \int_{r\leq \langle t\rangle /2}   N_d(\Gamma^b  u,\Gamma^c u) \partial_t \Gamma^a  u e^{q} dx\\
&&\lesssim     \sum_{\tiny\begin{matrix} |b|+|c|< |a|\\|b | \leq [|a |/2] \end{matrix}}
\int_{r\leq \langle t\rangle /2} |\partial_t \Gamma^a  u|  |\partial^2 \Gamma^b  u| |\partial^2 \Gamma^c u| dx\\
&&\lesssim  \sum_{\tiny\begin{matrix} |b|+|c|< |a|\\|b | \leq [|a |/2] \end{matrix}}
            \|\partial_t \Gamma^a  u\|_{L^2(r\leq \langle t\rangle /2)}
            \|\partial^2 \Gamma^b  u\|_{L^\infty(r\leq \langle t\rangle /2)}
            \|\partial^2 \Gamma^c u\|_{L^2(r\leq \langle t\rangle /2)}    \\
&&\lesssim  \langle t \rangle^{-2}E_k E_{k-1}^{\frac 12}.
\end{eqnarray*}
When $r\geq \langle t\rangle /2$, we need to fully utilize the null condition of the nonlinearities. By Lemma \ref{nonlinearity-estimate} and Lemma \ref{lemK-S}, one has
\begin{eqnarray*}
&&\sum_{\tiny\begin{matrix} |b|+|c|< |a|\\d\neq 0 \end{matrix}}\int_{r\geq \langle t\rangle /2}   N_d(\Gamma^b  u,\Gamma^c u) \partial_t \Gamma^a  u e^{q} dx\\
&&
\lesssim\sum_{|b| +|c| < |a| }\int _{r\geq \langle t\rangle /2}
\frac{1}{r}  |\partial \Gamma^a u|
|\partial \Gamma^{|b| +1} u| |\partial \Gamma^{|c|+1} u| dx\\
&&\lesssim
\sum_{\tiny\begin{matrix} |b|+|c|< |a|\\|b | \leq [|a |/2] \end{matrix}}
\langle t \rangle^{-1}
\|\partial \Gamma^a u\|_{L^2}   \|\partial \Gamma^{|b| +1} u\|_{L^\infty}  \|\partial \Gamma^{|c|+1} u\|_{L^2} \\
&&\lesssim  \langle t \rangle^{-\frac 32}E_k E_{k-1}^{\frac 12}.
\end{eqnarray*}
It then follows from the above three estimates that
\begin{equation*}
\frac{d}{dt} \int_{\mathbb{R}^2} |\partial \Gamma^a  u|^2 dx
\lesssim \langle t \rangle^{-1}E_{k}E_{k-1}^{\frac 12}.
\end{equation*}
Summing over $|a|\leq k-1$ yields
\begin{eqnarray*}
\frac{d}{dt}E_k(t)\lesssim \langle t\rangle^{-1} E_k(t)E^{\frac 12}_{k-1}(t).
\end{eqnarray*}
This gives the first differential inequality \eqref{HighOrderEnergyEst}.

\subsection{Lower-order Energy Estimate}
In this subsection, we perform the lower-order energy estimate.
Unlike the higher-order energy estimate, we care less about
the derivative loss problems since it's not an issue.
We will focus our mind on obtaining the maximal decay in time.

Let $|a |\leq k-2$. Multiplying the equations \eqref{wave-Gamma} with
$\partial_t \Gamma^a  u $, then integrating over $\mathbb{R}^2$, we have
\begin{eqnarray*}
&&\frac{d}{dt} \int_{\mathbb{R}^2}
{\frac12 (|\partial_t\Gamma^a  u|^2+|\nabla\Gamma^a  u|^2 )} dx\\
&&= \int_{\mathbb{R}^2}
 \sum_{b +c+d= a }  N_d(\Gamma^b  u,\Gamma^c u) \partial_t \Gamma^a  u  dx.
\end{eqnarray*}
We still split the integral domains into two parts.  When $r\leq \langle t\rangle /2$, by lemma \ref{lemK-S-deriv}, one gets
\begin{eqnarray*}
&& \int_{r\leq \langle t\rangle /2}
 \sum_{b +c+d = a }  N_d(\Gamma^b  u,\Gamma^c u) \partial_t \Gamma^a  u dx \\
 &&\lesssim     \sum_{\tiny\begin{matrix} |b|+|c|\leq |a|\\|b | \leq [|a |/2] \end{matrix}}
\int_{r\leq \langle t\rangle /2} |\partial_t \Gamma^a  u|  |\partial^2 \Gamma^b  u| |\partial^2 \Gamma^c u| dx\\
&&\lesssim  \sum_{\tiny\begin{matrix} |b|+|c|\leq |a|\\|b | \leq [|a |/2] \end{matrix}}
            \|\partial_t \Gamma^a  u\|_{L^2(r\leq \langle t\rangle /2)}
            \|\partial^2 \Gamma^b  u\|_{L^\infty(r\leq \langle t\rangle /2)}
            \|\partial^2 \Gamma^c u\|_{L^2(r\leq \langle t\rangle /2)}    \\
&&\lesssim  \langle t \rangle^{-2}E_k^{\frac 12} E_{k-1}.
\end{eqnarray*}
When $r\geq \langle t\rangle /2$, by Lemma \ref{Lemma1} and Lemma \ref{lemK-S}, one gets
\begin{eqnarray*}
&&\int_{r\geq \langle t\rangle /2}
 \sum_{b +c+d = a }  N_d(\Gamma^b  u,\Gamma^c u) \partial_t \Gamma^a  u dx \\
&&\lesssim
\sum_{|b|+|c|\leq |a| }\int _{r\geq \langle t\rangle /2}
\frac{1}{r}  |\partial \Gamma^a u|
|\partial \Gamma^{|b| +1} u| |\partial \Gamma^{|c|+1} u| dx\\
&&\lesssim
\sum_{\tiny\begin{matrix} |b|+|c|\leq |a|\\|b | \leq [|a |/2] \end{matrix}}
\langle t \rangle^{-1}
\|\partial \Gamma^a u\|_{L^2}   \|\partial \Gamma^{|b| +1} u\|_{L^\infty}  \|\partial \Gamma^{|c|+1} u\|_{L^2} \\
&&\lesssim  \langle t \rangle^{-\frac 32}E_{k-1}E_k^{\frac 12}.
\end{eqnarray*}
Gathering all the above estimates in this subsection, and summing over $|a|\leq k-1$ yields
\begin{eqnarray*}
\frac{d}{dt}E_{k-1}(t)\lesssim \langle t\rangle^{-\frac 32} E_k^{\frac 12}(t)E_{k-1}(t).
\end{eqnarray*}
This gives the lower-order energy estimate \eqref{LowOrderEnergyEst}.

\section{Proof of Theorem \ref{thm-quasi}}

In this section, we are going to prove Theorem \ref{thm-quasi}. The main idea is to transform \eqref{quasi-equation}
into fully nonlinear ones. For the latter, we have obtained the global well-posedness result in the last section.

First, we give the definition of $\varphi$ and $\psi$ which appear in Theorem \ref{thm-quasi}.
For \eqref{quasi-equation}, if $A_0=0$, we assume $A_1\neq 0$ without loss of generality. Under this case, define
\begin{equation} \label{InitialData1}
\begin{cases}
\varphi=\frac{1}{A_1}\int_{-\infty}^{x_1} v_0\big(s,x_2+\frac{A_2}{A_1}(s-x_1) \big)ds,\\
\psi=\frac{1}{A_1}\int_{-\infty}^{x_1} v_1\big(s,x_2+\frac{A_2}{A_1}(s-x_1) \big)ds.
\end{cases}
\end{equation}
Otherwise, if $A_0\neq 0$, define
\begin{equation}\label{InitialData2}
\begin{cases}
\varphi=\chi,\\
\psi=\frac{1}{A_0}v_0-\frac{A_1}{A_0}\partial_1\chi-\frac{A_2}{A_0}\partial_2\chi,
\end{cases}
\end{equation}
 where $\chi$ is a function satisfing the following equation
\begin{eqnarray}  \label{InitialDataCondition3}
\begin{cases}
(A_1\partial_1 +A_2\partial_2)^2 \chi-A^2_0\Delta \chi \\
\quad=-A_0\partial_t v(0)+A_1\partial_1 v(0)+A_2\partial_2 v(0)
+A^2_0N_{\mu\delta}\partial_\mu v(0) \partial_\delta v(0),\\
(\chi(x),\nabla\chi(x)) \in H^k_\Lambda.
\end{cases}
\end{eqnarray}

Next, we elucidate the nonlinearities of \eqref{quasi-equation}.
Obviously \eqref{quasi-null} implies \eqref{quasi-null2}. On the other side,
if
\begin{equation*}
A_l N_{\mu\delta} X_l X_\mu X_\delta=0
\end{equation*}
for all $X\in \Sigma$, one deduce that
\begin{equation*}
(A_l X_l) (N_{\mu\delta} X_\mu X_\delta)=0.
\end{equation*}
 From  which one must have
\begin{equation*}
N_{\mu\delta} X_\mu X_\delta=0
\end{equation*}
for all $X\in \Sigma$. This means \eqref{quasi-null} and \eqref{quasi-null2} are equivalent.

Following the same argument, one can see that \eqref{quasi-null} is also equivalent to
\begin{equation} \label{quasi-null3}
A_lA_m N_{\mu\delta}X_lX_m  X_\mu X_\delta=0.
\end{equation}
We will directly use the  null condition \eqref{quasi-null3} in the following proof.

Before the proof of Theorem \ref{thm-quasi},
we first state a simple lemma which asserts that the null condition is preserved under symmetrization procedure.
\begin{lemma} \label{lemmaSymmetry}
Suppose that the nonlinearities \eqref{nonlinearities} satisfy \eqref{null-condition}, but they may not satisfy
the symmetry condition \eqref{symmetry-condition}. After some symmetrization procedure, they still satisfy
\eqref{null-condition}.
\end{lemma}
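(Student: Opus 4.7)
The plan is to exhibit the explicit symmetrization and then observe that contraction against the totally symmetric tensor $X_\alpha X_\beta X_\mu X_\nu$ is insensitive to it. Concretely, given any $N_{\alpha\beta\mu\nu}$ satisfying the null condition \eqref{null-condition}, I would define
\[
\tilde N_{\alpha\beta\mu\nu}
=\tfrac14\bigl(N_{\alpha\beta\mu\nu}+N_{\beta\alpha\mu\nu}+N_{\alpha\beta\nu\mu}+N_{\beta\alpha\nu\mu}\bigr),
\]
which is manifestly invariant under $\alpha\leftrightarrow\beta$ and under $\mu\leftrightarrow\nu$, so the symmetry \eqref{symmetry-condition} holds for $\tilde N$. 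Moreover, replacing $N$ by $\tilde N$ does not change the nonlinearity \eqref{nonlinearities}, because $\partial_\alpha\partial_\beta u=\partial_\beta\partial_\alpha u$ and similarly for $\partial_\mu\partial_\nu u$, so each of the four summands produces the same quadratic form and $\tilde N(u,u)=N(u,u)$.

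The null condition is then immediate. For any $X\in\Sigma$ the monomial $X_\alpha X_\beta X_\mu X_\nu$ is totally symmetric in its indices, so
\[
\tilde N_{\alpha\beta\mu\nu}X_\alpha X_\beta X_\mu X_\nu
=\tfrac14\bigl(N_{\alpha\beta\mu\nu}+N_{\beta\alpha\mu\nu}+N_{\alpha\beta\nu\mu}+N_{\beta\alpha\nu\mu}\bigr)X_\alpha X_\beta X_\mu X_\nu
=N_{\alpha\beta\mu\nu}X_\alpha X_\beta X_\mu X_\nu=0,
\]
since each of the four relabelings is just a renaming of summation indices applied to the same monomial. Hence $\tilde N$ satisfies \eqref{null-condition}.

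There is essentially no obstacle here: the only subtlety is to point out why we are free to replace $N$ by $\tilde N$ without altering the equation (equality of the associated bilinear form on second derivatives), and that the null‑form coefficient $N_{\alpha\beta\mu\nu}X_\alpha X_\beta X_\mu X_\nu$ only depends on the totally symmetric part of $N$. These are the two ingredients I would emphasize in the write‑up.
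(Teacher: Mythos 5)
Your proof is correct and matches the paper's argument: you define the same symmetrized tensor $\widetilde N_{\alpha\beta\mu\nu}=\tfrac14(N_{\alpha\beta\mu\nu}+N_{\beta\alpha\mu\nu}+N_{\alpha\beta\nu\mu}+N_{\beta\alpha\nu\mu})$, observe that it gives the same nonlinearity because $\partial_\alpha\partial_\beta u$ is symmetric, and that its contraction with $X_\alpha X_\beta X_\mu X_\nu$ agrees with the original by index relabeling. The paper states these checks without spelling them out, but your write‑up is the same proof in slightly greater detail.
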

\begin{proof}
Define
\begin{equation*}
\widetilde{N}_{\alpha\beta\mu\nu}=\frac 14 ( N_{\alpha\beta\mu\nu}+ N_{\beta\alpha\mu\nu}+N_{\alpha\beta\nu\mu}+ N_{\beta\alpha\nu\mu}).
\end{equation*}
One can check that
\begin{equation*}
N_{\alpha\beta\mu\nu} \partial_\alpha \partial_\beta u \partial_{\mu} \partial_{\nu}u
=\widetilde{N}_{\alpha\beta\mu\nu} \partial_\alpha \partial_\beta u \partial_{\mu} \partial_{\nu}u.
\end{equation*}
Moreover, $\widetilde{N}_{\alpha\beta\mu\nu}$ satisfy the symmetry \eqref{symmetry-condition} and the null condition
\eqref{null-condition}.
\end{proof}
Now we prove Theorem \ref{thm-quasi}.
\begin{proof}
We are going to show that
\eqref{quasi-equation} can be transformed into
\begin{equation}\label{eq-fully-transform}
\begin{cases}
\Box u=A_\lambda A_\nu N_{\mu\delta}\partial^2_{\lambda\mu} u \partial^2_{\nu\delta} u, \\
u(0,x)=\varphi,\partial_t u(0,x) =\psi,
\end{cases}
\end{equation}
where $(\varphi,\psi)$ is defined by  \eqref{InitialData1} or by \eqref{InitialData2}-\eqref{InitialDataCondition3}.

\textbf{Case a:} $A_0=0$.

 Without loss of generality, we assume $A_1\neq0$, let
\begin{eqnarray}
&&u(t,x)=\frac{1}{A_1}\int_{-\infty}^{x_1} v\big(t,s,x_2+\frac{A_2}{A_1}(s-x_1) \big)ds. \nonumber
\end{eqnarray}
Simple calculation shows
\begin{eqnarray}
&&\partial_1 u(t,x)=\frac{1}{A_1}v(t,x)-\frac{A_2}{A^2_1}\int_{-\infty}^{x_1} \partial_2 v\big(t,s,x_2+\frac{A_2}{A_1}(s-x_1) \big) d s, \nonumber\\
&&\partial_2 u(t,x)=\frac{1}{A_1} \int_{-\infty}^{x_1} \partial_2 v\big(t,s,x_2+\frac{A_2}{A_1}(s-x_1) \big) d s . \nonumber
\end{eqnarray}
One can infer that
\begin{equation}
v=A_i\partial_i u . \nonumber
\end{equation}
Inserting them into \eqref{quasi-equation}, we get
\begin{equation*}
A_i\partial_i( \Box u-A_\lambda A_\nu N_{\mu\delta}\partial^2_{\lambda\mu} u \partial^2_{\nu\delta} u)=0.
\end{equation*}
Let
\begin{equation}\nonumber
 \Box u-A_\lambda A_\nu N_{\mu\delta}\partial^2_{\lambda\mu} u \partial^2_{\nu\delta} u=0.
\end{equation}
Obviously one has
\begin{eqnarray*}
&&u(0,x)=\frac{1}{A_1}\int_{-\infty}^{x_1} v_0\big(s,x_2+\frac{A_2}{A_1}(s-x_1) \big)ds,\\
&&\partial_t u(0,x)=\frac{1}{A_1}\int_{-\infty}^{x_1} v_1\big(s,x_2+\frac{A_2}{A_1}(s-x_1) \big)ds,
\end{eqnarray*}
which gives \eqref{InitialData1}.

\textbf{Case b:} $A_0\neq0$.

Let
\begin{eqnarray}
&&u(t,x)=\frac{1}{A_0}\int_0^t v\big(s,x_1+\frac{A_1}{A_0}(s-t),x_2+\frac{A_2}{A_0}(s-t) \big)ds  \nonumber\\
&&\qquad\qquad +\chi\big(x_1-\frac{A_1}{A_0}t,x_2-\frac{A_2}{A_0}t \big), \nonumber
\end{eqnarray}
where $\chi$ will be chosen later. Simple calculation shows
\begin{eqnarray}
&&\partial_t u(t,x)=\frac{1}{A_0}v(t,x)\nonumber\\
&&\qquad\qquad\quad +\frac{1}{A_0}\int_0^t (-\frac{A_1}{A_0}\partial_1-\frac{A_2}{A_0}\partial_2) v\big(s,x_1+\frac{A_1}{A_0}(s-t),x_2+\frac{A_2}{A_0}(s-t) \big)ds  \nonumber\\
&&\qquad\qquad\quad -(\frac{A_1}{A_0}\partial_1+\frac{A_2}{A_0}\partial_2)\chi\big(x_1-\frac{A_1}{A_0}t,x_2-\frac{A_2}{A_0}t \big), \nonumber\\
&&\partial^2_t u(t,x)=\frac{1}{A_0}\partial_t v(t,x)
-\frac{1}{A_0}(\frac{A_1}{A_0}\partial_1+\frac{A_2}{A_0}\partial_2)v(t,x)\nonumber\\
&&\qquad\qquad\quad +\frac{1}{A_0}\int_0^t (\frac{A_1}{A_0}\partial_1+\frac{A_2}{A_0}\partial_2)^2 v\big(s,x_1+\frac{A_1}{A_0}(s-t),x_2+\frac{A_2}{A_0}(s-t) \big) ds  \nonumber\\
&&\qquad\qquad\quad +(\frac{A_1}{A_0}\partial_1+\frac{A_2}{A_0}\partial_2)^2\chi\big(x_1-\frac{A_1}{A_0}t,x_2-\frac{A_2}{A_0}t \big), \nonumber\\
&&\partial_1 u(t,x)=\frac{1}{A_0}\int_0^t \partial_1 v\big(s,x_1+\frac{A_1}{A_0}(s-t),x_2+\frac{A_2}{A_0}(s-t) \big)ds  \nonumber\\
&&\qquad\qquad\quad +\partial_1\chi\big(x_1-\frac{A_1}{A_0}t,x_2-\frac{A_2}{A_0}t \big), \nonumber\\
&&\partial_2 u(t,x)=\frac{1}{A_0}\int_0^t \partial_2 v\big(s,x_1+\frac{A_1}{A_0}(s-t),x_2+\frac{A_2}{A_0}(s-t) \big)ds  \nonumber\\
&&\qquad\qquad\quad +\partial_2\chi\big(x_1-\frac{A_1}{A_0}t,x_2-\frac{A_2}{A_0}t \big). \nonumber
\end{eqnarray}
Combing the expressions in the above, we deduce that
\begin{equation}
v=A_i\partial_i u . \nonumber
\end{equation}
Inserting them into \eqref{quasi-equation}, we get
\begin{equation}\label{l2}
A_i\partial_i( \Box u-A_\lambda A_\nu N_{\mu\delta}\partial^2_{\lambda\mu} u \partial^2_{\nu\delta} u)=0.
\end{equation}
Let
\begin{equation}
(\Box u-A_\lambda A_\nu N_{\mu\delta}\partial^2_{\lambda\mu} u \partial^2_{\nu\delta} u )|_{t=0}=0, \nonumber
\end{equation}
then the expression inside the bracket of \eqref{l2} always vanishes.
Thus the quasilinear wave equations \eqref{quasi-equation} will be transformed into the following fully nonlinear wave equation
\begin{equation}\nonumber
\begin{cases}
\Box u=A_\lambda A_\nu N_{\mu\delta}\partial^2_{\lambda\mu} u \partial^2_{\nu\delta} u, \\
u(0,x)=\chi,\partial_t u(0,x) =\frac{1}{A_0}v_0-\frac{A_1}{A_0}\partial_1\chi-\frac{A_2}{A_0}\partial_2\chi,
\end{cases}
\end{equation}
where $\chi$ is a function satisfying the following relation
\begin{eqnarray}\nonumber
\begin{cases}
(\frac{A_1}{A_0}\partial_1 +\frac{A_2}{A_0}\partial_2)^2 \chi-\Delta \chi \\
\quad=-\frac{1}{A_0}\partial_t v(0)+\frac{A_1}{A^2_0}\partial_1 v(0)+\frac{A_2}{A^2_0}\partial_2 v(0)
+N_{\mu\delta}\partial_\mu v(0) \partial_\delta v(0),\\
(\chi(x),\nabla\chi(x))\in H^k_\Lambda.
\end{cases}
\end{eqnarray}
This gives \eqref{InitialData2} and \eqref{InitialDataCondition3}.

 In the above two cases, the quasilinear wave equations \eqref{quasi-equation} are both transformed to the fully nonlinear wave equations \eqref{eq-fully-transform}. Note the null condition assumption \eqref{quasi-null3}, then if the initial data $(\varphi,\psi)\in H^k_\Lambda$ with $k\geq 8$ and they satisfy \eqref{thm-InitialDataCondition},
we obtain the global existence result to \eqref{eq-fully-transform} according to Theorem \ref{thm} and Lemma \ref{lemmaSymmetry}. Consequently, the quasilinear wave equations \eqref{quasi-equation} have global classical solutions
since $v=A_i\partial_i u$.
\end{proof}

\begin{remark}
The transformation from quasilinear wave equations \eqref{quasi-equation} to fully nonlinear equation \eqref{eq-fully-transform} is reversible. Consider
\begin{equation} \label{wave-fully-2}
\begin{cases}
\Box u=A_\lambda A_\nu N_{\mu\delta}\partial^2_{\lambda\mu} u \partial^2_{\nu\delta} u, \\
u(0,x)=u_0(x),
\partial_t u(0,x)=u_1(x).
\end{cases}
\end{equation}
 We assume  \eqref{quasi-null} holds, let $(u_0,u_1)\in H^k_\Lambda$ and they satisfy the condition \eqref{thm-InitialDataCondition}.
According to Theorem \ref{thm} and Lemma \ref{lemmaSymmetry}, we obtain the global existence result for \eqref{wave-fully-2}.
Let $v=A_i\partial_i u$, then $v$ exits globally in time and satisfies the following quasilinear wave equation
\begin{equation} \label{A-class-quasi}
\begin{cases}
\Box v=A_l\partial_l(N_{\mu\delta}\partial_\mu v\partial_\delta v), \\
v(0,x)=A_i\partial_i u(0,x),\partial_t v(0,x) =A_i\partial_i\partial_t u(0,x).
\end{cases}
\end{equation}
\end{remark}

\section{Proof of Remark \ref{RemarkUniformBound}}
In this section, we are going to prove Remark \ref{RemarkUniformBound}.
\begin{proof}
Following the argument in the above section, let
\begin{equation*}
u(t,x)=\int_0^t v(\tau,x) d \tau +(-\Delta)^{-1}(|v_1|^2-|\nabla v_0|^2-v_1).
\end{equation*}
Then the equation \eqref{quasi-equation-standard} can be transformed to the following fully nonlinear wave equation
\begin{equation} \label{wave-fully}
\begin{cases}
\Box u= |\partial^2_t u|^2 - |\partial_t\nabla u|^2 , \\
u(0,\cdot)=(-\Delta)^{-1}(|v_1|^2-|\nabla v_0|^2-v_1),  \partial_t u(0,\cdot)=v_0.
\end{cases}
\end{equation}
For the nonlinearities in \eqref{wave-fully}, it's easy to see that
 they satisfy the null condition \eqref{null-condition}.
 Hence by Lemma \ref{LemmGammaEq}, we have
\begin{equation} \label{fully-wave-Gamma}
\Box \Gamma^a  u=\sum_{b+c+d = a}  N_d(\Gamma^b u,\Gamma^c u).
\end{equation}
Now we preform the highest order energy estimate for \eqref{wave-fully}.
Since the techniques are essentially the same as
the ones we have used in Section 4, we only sketch
the main line of the argument.

Let $k\geq 8$, $|a |\leq k-1$, multiplying the equation \eqref{fully-wave-Gamma} with
$\partial_t \Gamma^a  u $ and integrating over $\mathbb{R}^2$, we have
\begin{eqnarray*}
&&\frac{d}{dt} \int_{\mathbb{R}^2}
 {\frac12 (|\partial_t\Gamma^a  u|^2+|\nabla\Gamma^a  u|^2 )} dx\\
&&=\int_{\mathbb{R}^2} 2(\partial_t^2 \Gamma^a u \partial_t^2 u  - \partial_t \nabla\Gamma^a u \cdot \partial_t \nabla u)\partial_t \Gamma^a  u dx \\
&&\quad+\sum_{\tiny\begin{matrix}b+c+d = a \\ d\neq 0 \end{matrix}\tiny}
 \int_{\mathbb{R}^2} N_d(\Gamma^b u,\Gamma^c u) \partial_t \Gamma^a  u dx .
\end{eqnarray*}
For the highest order terms, we deduce from integration by parts that
\begin{eqnarray*}
&&2\int_{\mathbb{R}^2} (\partial_t^2 \Gamma^a u \partial_t^2 u  - \partial_t \nabla\Gamma^a u\partial_t \nabla u)\partial_t \Gamma^a  u dx \\
&&=-\int_{\mathbb{R}^2} |\partial_t^2\Gamma^a u|^2 \partial_t\Box u
 +\partial_t\int_{\mathbb{R}^2} |\partial_t \Gamma^a u|^2 \partial_t^2 u dx\\
&&\leq E_k(t) \|\partial_t\Box u \|_{L^\infty}+\partial_t\int_{\mathbb{R}^2} |\partial_t \Gamma^a u|^2 \partial_t^2 u dx.
\end{eqnarray*}
By Lemma \ref{lemK-S}, Lemma \ref{lemGamma}, and Lemma \ref{lemK-S-deriv}, we have
\begin{equation*}
\|\partial_t\Box u \|_{L^\infty}\lesssim \langle t\rangle^{-\frac 32} E_k^{\frac 12}(t).
\end{equation*}
The last term $\partial_t\int_{\mathbb{R}^2} |\partial_t \Gamma^a u|^2 \partial_t^2 u dx$ can be treated similar to \eqref{Estimate-purturbation1}-\eqref{Estimate-purturbation2} as a lower order perturbation.
On the other hand, following the paradigm of the energy estimates we have done in Section 4, we can estimate the remaining lower order terms as follows:
\begin{eqnarray*}
\sum_{\tiny\begin{matrix}b+c+d = a \\ d\neq 0 \end{matrix}\tiny}
 \int_{\mathbb{R}^2} N_d(\Gamma^b u,\Gamma^c u) \partial_t \Gamma^a  u dx
 \lesssim \langle t\rangle^{-\frac 32} E_k^{\frac 32}(t).
\end{eqnarray*}
Thus we infer by gathering the the above argument that
\begin{equation*}
\frac{d}{dt}E_{k}(t)\lesssim \langle t\rangle^{-\frac 32} E_k^{\frac 32}(t).
\end{equation*}
By continuity method, if $E_k(0)<\epsilon$ for a sufficiently small $\epsilon>0$, then the fully nonlinear wave equation \eqref{wave-fully} is globally well-posedness and the highest order energy is uniformly bounded : $E_k(t)<C\epsilon$ for some universal constant $C$. Note that we can transform \eqref{quasi-equation-standard} to \eqref{wave-fully}, hence
the highest energy of \eqref{quasi-equation-standard} is also uniformly bounded.
\end{proof}

\section{Acknowledgements}
The first two authors were in part supported by NSFC (Grant No. 11171072 and
11222107), NCET-12-0120, National Support Program for Young Top-Notch Talents, Shanghai
Shu Guang project and Shanghai Talent Development Fund. The third author was
supported by the NSF Grant DMS-1211806.

\end{document}